\def\a{\alpha}
\def\lam{\lambda}
\renewcommand{\S}{\mathbb{S}}
\newcommand{\T}{\mathbb{T}}
\newcommand{\C}{\mathbb{C}}
\newcommand{\R}{\mathbb{R}}
\newcommand{\Z}{\mathbb{Z}}
 \newcommand{\N}{\mathbb{N}}
  \def\cH{{\cal H}}  
   \def\cO{{\cal O}} 
   \def\cP{{\cal P}}
\newcommand{\eps}{\varepsilon}
\renewcommand{\leq}{\leqslant}
\renewcommand{\geq}{\geqslant}
\renewcommand{\epsilon}{\varepsilon}
\newtheorem{theo}{Theorem}
\newtheorem{lemm}{Lemma}
\newtheorem{prop}{Proposition}
\newtheorem{rema}{Remark}
\newtheorem{question}{Question}
\newtheorem{defin}{Definition}
\title{Analytic uniquely ergodic volume preserving maps on odd spheres}
\author{Bassam Fayad and Anatole Katok\footnote{Based on research supported by the NSF grant 1002554}}
\begin{document}

\maketitle
\begin{abstract} We construct examples of volume-preserving uniquely ergodic (and hence minimal) real-analytic diffeomorphisms on odd-dimemsional spheres.
\end{abstract} 

\section{Introduction} 

There is a general belief that topology of a manifold $M$ with some  low-dimensional exceptions,   does not influence ergodic properties of volume-preserving dynamical systems on $M$ and that  restriction on  topological properties  of systems with strong recurrence, say, come only  from algebraic and differential topology rather than from dynamics. 

There are two aspects here: (i) the {\em smooth realization problem}  that asks what  isomorphism types or properties of measure-preserving transformations or flows  appear for  volume-preserving
dynamical systems on  a compact manifold and (ii) the {\em phase space dependence}: given  an  isomorphism type or property (measurable or topological)  that appear in a  smooth dynamical system on a compact manifold $M$
describe  the class of manifolds that allow a system of the same kind.

We do not discuss the smooth realization problem here.   It is enough to mention that, while the only known restriction is finiteness of entropy (and it is not specific to  systems preserving a smooth measure and true for any Borel measure),  very few systems 
 that are naturally not smooth  have been shown to allow a smooth realization, e.g. certain translation on the infinite-dimensional tori, see \cite{AK},   and certain unpublished constructions. 
 
 More is known  about the phase space dependence. For example, using  a  surjective continuous map diffeomorphic on the interior from the closed  disc onto an arbitrary  compact manifold (closed or with boundary) of the same dimension  one shows that every system that can be realized on the disc $\mathbb D^n$ an is sufficiently flat  at the boundary can be  realized on an arbitrary $n$-dimensional manifold. This was used to show existence of a  zero entropy ergodic diffeomorphisms and flows \cite{A} and 
 Bernoulli transformations \cite{BFK}.

 Beyond Bernoulli  case and its simple concatenations with zero entropy examples,   smooth positive entropy examples are few and far between, see  \cite{K-inv, Rudolph}.  In the zero entropy setting
 however there is  versatile {\em approximation by conjugation}  method originally introduced in \cite{AK} and sometimes called Anosov-Katok method. We use this method in the present work and introduce  basic scheme in Section~\ref{ACscheme}. For a detailed modern overview of the method see \cite{FK}. In the discrete time case as the starting point the method requires an  effective smooth action of the  circle 
 (for ergodic properties)  or free or at least locally free action of the circle (for properties  involving behavior of all orbits such as minimality or  unique ergodicity). Similarly for the continuous time constructions   an action of the two-dimenssional torus  on the ambient manifold is needed. 
 
After a long lull following  original  development in the late 1960s -- mid 1970s this methods enjoyed a lively  resurrection  during the last decade or so. As examples of important  advances during that period  one  should mention  a multiple frequency'' version of the method that allows to produce  new classes of mixing examples \cite{Fayad-mixing, FK} and     realization of any circle  rotation  with a Liouvillean rotation number as a  $C^\infty$ volume preserving diffeomorphism 
 of any  compact manifold with a circle action  \cite{FSW}. 
 
 Applicability of the approximation by conjugation method  critically depends on construction of successive conjugating diffeomorphisms with prescribed behavior. This conjugacies are invariably  very large in the appropriate topologies but they should lie in the space; e.g. all derivatives for the map and its inverse must exist although they may  be very large. What is required from those conjugacies 
 is controlled behavior in a large mart of the phase space. In the smooth category such constructions are readily available since maps  defined on various parts of the space can be glued together.\footnote{Notice however difficulties of the global character that appear in the symplectic versions of the method, see \cite{K-Hamiltonian}.} However the situation changes drastically when one passes to the real-analytic category. The most basic property  required to start  the construction in a particular  class  is transitivity 
 of the action by  diffeomorphisms of that class on pairs  of points.  In the  setting of a real-analytic manifold $M$ this 
 means existence of  diffeomorphisms $H_{x,y}$   for any pair $x,y\in M$  such that $H_{x,y}x=y$
 such that both $H_{x,y}$ and their inverses extend to a fixed complex neighborhood of $M$. 
 We are not aware of such a fact for close manifolds  but for manifolds with boundaries  or  for a restricted version, say   requiring that $H$ fixes a point $z$  that is excluded from the construction
 there are obvious difficulties.  Those are  situations that appear  for example in the most basic cases  where effective analytic action of the circle exists: the disc $\mathbb D^2$ and the two-dimensional sphere $S^2$.  Accordingly the following basic question is still open: 
 
 \begin{question} Does there exist a real analytic area preserving diffeomorphism of $\mathbb D^2$ or  $\S^2$ that is ergodic and  has zero entropy?
\end{question} 

Other properties such as ergodicity and closeness to the identity, almost minimality, almost unique ergodicity, etc. are not available on the disc or the sphere in the real analytic category.\footnote{The original Bernoulli construction on the sphere or the disc from \cite{K-79} can be   carried out in the real-analytic category with proper adjustments; see \cite{Gerber, LdeSa}.}

 In this paper we consider the  most basic  situation  where such a transitive family commuting with a 
 free action of the circle is  present, namely the odd-dimensional spheres. A construction of volume-preserving uniquely ergodic  real-analytic diffeomorphisms   on $\S^{2n+1}$  was outlines in \cite{FK}. 
 In the present paper we give complete proofs. 
 
Let us emphasize that spheres are considered to present the method in a succinct  way. Existence of a transitive family with large domain of analyticity is the key. For example, our results extend fairly straightforwardly to the  case of compact Lie groups, the setting is explained  in Section~\ref{results}.
An even more general setting is possible; it will appear in a subsequent paper.
 
\section{Formulation of the result and outline of proof}





\subsection{Notations.} We will consider the standard embedding of the sphere $\S^{2n-1}$ into $\R^{2n}$ and the standard complexification $\R^{2n} \subset \C^{2n}$. The vector-field defined in Euclidean coordinates as $v_0(x_1,...,x_{2n}) = 2\pi(x_2,-x_1,...,x_{2n},x_{2n-1})$ defines a linear action of the circle $\S^1$ which we will denote by $\phi_t, t \in \R$, $\phi_1 = {\rm Id}$. In Euclidean coordinates
$\phi_t(x_1,...,x_{2n})=(\cos(2\pi t) x_1 +\sin(2\pi t)x_2,-\sin(2\pi t)x_1 +\cos(2\pi t)x_2,...,
\cos(2\pi t) x_{2n-1} +\sin(2\pi t) x_{2n},-\sin(2\pi t) x_{2n-1} +\cos (2\pi t)x_{2n})$.
We will use the same notations $v_0$ and $\varphi^t$ for extensions to $\C^{2n}$ or its subsets. We will call a function on $\S^{2n-1}$ entire if it extends to a holomorphic function on $\C^{2n}$. We say that the map is in $C^\omega_\Delta$ if it extends to a holomorphic function in the ball $B_\Delta:=\{z \in \C^{2n} : |z|\leq \Delta\}$. We then use the notation $h\in C^\omega_\infty$ if $h$ is entire.  A map $f : \S^{2n-1} \to \S^{2n-1}$ is said to be $C^\omega_\Delta$ if its coordinate functions are $C^\omega_\Delta$. A diffeormorphism f : $\S^{2n-1} \to \S^{2n-1}$ is $C^\omega_\Delta$ if both $f$
 and $f^{-1}$ are $C^\omega_\Delta$.  Invertible linear maps are obviously entire differomorphisms. Notice that product of entire diffeomorphisms is an entire diffeomorphism so that entire diffeomorphisms form a group that we will denote ${\rm Ent}(\S^{2n-1})$. Its subgroup of entire diffeomorphisms preserving Lebesque measure $\lambda$ is denoted by  ${\rm Ent}(\S^{2n-1},\lambda)$. A homeomorphism $h$ of a compact metric space $X$ is uniquely ergodic if it has only one invariant Borel probability measure. If h preserves a measure with full support (nonempty open sets have positive measure) then unique ergodicity implies minimality (every orbit is dense). Unique ergodicity is equivalent to a uniform distribution property: time averages of any continuous function converge uniformly to a constant which is then equal to the integral with respect to the invariant probability measure.

\subsection{Formulation of the result.}\label{results} Fix $n \in \N$. For $\Delta>0$ and $f,g \in C^\omega_\Delta (\S^{2n-1})$ we denote 
$${|f-g|}_\Delta=\max \left\{ \max_{z \in B_\Delta} |f(z)-g(z)|,  \max_{z \in B_\Delta} |f^{-1}(z)-g^{-1}(z)|\right\}$$

\begin{theo} \label{theorem.ergodic}
For any $t_0 \in [0,1]$ and any $\eps>0$, $\Delta>0$, there exists a uniquely ergodic volume preserving diffeomorphism $f \in C^\omega_\Delta (\S^{2n-1})$ that satisfies 
\begin{equation*}
{|f-\varphi^{t_0}|}_\Delta < \eps 
\end{equation*} 
Furthermore, the diffeomorphism $f$ is obtained as a limit in the $C^\omega_\Delta$ norm of entire maps of the form $F_n = H_n \circ \varphi^{t_n} \circ H_n^{-1}$,   $H_n \in {\rm Ent}(\S^{2n-1},\lambda)$.
\end{theo}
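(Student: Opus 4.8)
The plan is to use the approximation by conjugation (Anosov--Katok) method in the real-analytic category, exploiting the fact that $\S^{2n-1}\subset\C^{2n}$ carries a free analytic circle action $\varphi^t$ together with a large supply of entire volume-preserving diffeomorphisms. We construct $f$ as a limit $f=\lim F_m$ with $F_m = H_m\circ\varphi^{t_m}\circ H_m^{-1}$, where $t_m=p_m/q_m\to\alpha$ is a sequence of rationals chosen so that $t_m$ converges to a Liouville number (extremely well approximable, with speed dictated by the sizes of $H_m$), and $H_m = H_{m-1}\circ h_m$ with $h_m\in\operatorname{Ent}(\S^{2n-1},\lambda)$ commuting with $\varphi^{1/q_{m-1}}$, so that $F_m$ and $F_{m-1}$ agree to high order. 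The first term should be chosen with $t_1=t_0$ (or close) and $H_1$ close to the identity so the estimate $|f-\varphi^{t_0}|_\Delta<\eps$ holds; all subsequent corrections are made arbitrarily $C^\omega_\Delta$-small.

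The key inductive step is the construction of $h_{m+1}$. Since $\varphi^{t_m}$ has period $q_m$, its orbit space on $\S^{2n-1}$ is (away from the fixed circle of the action, which here is empty because the action is free) a manifold of dimension $2n-2$, and one needs $h_{m+1}$ to push the Lebesgue measure around so that the orbits of $\varphi^{t_{m+1}}$ become equidistributed at scale $\sim 1/q_m$. Concretely, I would take $h_{m+1}$ to be (a volume-preserving entire approximation of) a map that, composed with the flow, realizes a near-permutation of a fine partition of $\S^{2n-1}$ into $\varphi^{1/q_m}$-equivariant pieces, making $F_{m+1}$ behave like a rotation by a generator of $\Z/q_{m+1}\Z$ transitively permuting $q_{m+1}$ small boxes. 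The essential point, and the one where the real-analytic setting is genuinely different from the $C^\infty$ one, is that $h_{m+1}$ must be entire (or at least $C^\omega_\Delta$ for a fixed $\Delta$, in fact entire here so that the product stays entire): one cannot cut and paste. I would build such $h_{m+1}$ from the transitive family of entire diffeomorphisms $H_{x,y}$, composing finitely many of them with rotations $\varphi^{j/q_m}$ and truncating/smoothing to restore exact volume preservation while keeping holomorphy on all of $\C^{2n}$; on compact Lie groups one would instead use left translations, which are entire.

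With $h_{m+1}$ in hand, one checks convergence: choosing $q_{m+1}$ so large that $\|h_{m+1}\|$ on $B_\Delta$ (and of its inverse) is dominated by $q_{m+1}$ in the sense that $\|F_{m+1}-F_m\|_\Delta \lesssim \|H_m\|^{C}/q_{m+1}^{?}$ is summable, the limit $f$ exists as a $C^\omega_\Delta$ diffeomorphism, is volume preserving, and (since the corrections after step $1$ are tiny) satisfies the closeness estimate. Unique ergodicity then follows from the standard criterion: for every continuous $\psi$ and every $m$, the Birkhoff averages of $\psi$ along $F_{m+1}$-orbits are nearly constant (because $F_{m+1}$ transitively permutes the small boxes of a partition of mesh $\to 0$), and passing $m\to\infty$ with the $C^0$-control $\|f-F_{m+1}\|_{C^0}\to 0$ shows the Birkhoff averages of $f$ converge uniformly to $\int\psi\,d\lambda$; since $\lambda$ has full support this also gives minimality.

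The main obstacle I expect is precisely the entire (uniform-domain) realization of the ``combinatorial'' conjugacies $h_{m+1}$: one must produce volume-preserving holomorphic maps of $\C^{2n}$ that, on $\S^{2n-1}$, implement an essentially arbitrary equivariant rearrangement of a fine partition, with a quantitative bound on the complex norm of the map and its inverse that is polynomial (or at least controlled) in $q_m$, so that the fast-approximation scheme closes. Everything else — the choice of the Liouville speed of $t_m\to\alpha$, the telescoping convergence estimate, and the unique ergodicity criterion — is, by comparison, routine bookkeeping once this analytic building block is available. I would therefore organize the paper around first establishing the transitivity/rearrangement lemma for $\operatorname{Ent}(\S^{2n-1},\lambda)$ with explicit norm bounds, and only then running the general approximation-by-conjugation machinery of Section~\ref{ACscheme}.
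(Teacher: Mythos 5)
There is a genuine gap, and it sits exactly where you expect it to: the entire realization of the conjugacies $h_{m+1}$ together with their inverses. Your sketch --- build $h_{m+1}$ by composing translations from a transitive family and then ``truncating/smoothing to restore exact volume preservation while keeping holomorphy'' --- is precisely the approach the paper identifies as the one that \emph{fails} in the real-analytic category: one may be able to approximate a smooth rearrangement by an entire map, but the inverse will then develop singularities arbitrarily close to the real domain, and the whole fast-approximation scheme collapses because $|h_{m+1}^{-1}|_\Delta$ blows up. So your proposal names the right obstacle but offers no resolution of it, and the resolution you hint at is the one the authors explicitly rule out.

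The paper's actual mechanism is qualitatively different. The conjugacies are not near-permutations of boxes; they are \emph{shears} of the very specific form $g = k^{A\psi}$, where $k$ is a one-parameter subgroup of $SU(n)$ commuting with $\varphi$ (concretely the translations $\xi_i^s$ and $\tau_i^s$), and $\psi$ is a polynomial on $\S^{2n-1}$ (hence entire) that is \emph{constant along $k$-orbits} and $\varphi^{p/q}$-invariant. The constancy of $\psi$ along $k$-orbits is the decisive point: it gives the explicit inverse $g^{-1} = k^{-A\psi}$, so both $g$ and $g^{-1}$ are manifestly entire with no limit on the complex domain --- no truncation, no smoothing, no implicit function theorem. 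The $\varphi^{p/q}$-invariance makes $g$ commute with the base rational rotation, while the nontrivial dependence of $\psi$ on the $\varphi$-orbit parameter produces the twisting that spreads arcs of $\bar{F}_n$-orbits along the transitive torus action $\varphi^s k_0^{t_0}\cdots k_N^{t_N}$. Each Anosov--Katok step is itself a finite chain of such shears $g_0,\ldots,g_M$ over the transitive list $k_0,\ldots,k_M$, and unique ergodicity comes from showing that these arcs equidistribute along the transitive action (Propositions \ref{transitivity}, \ref{criterion}, \ref{main}, \ref{prop.explicit}) --- a continuous equidistribution mechanism rather than a combinatorial box-permutation.

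Two further omissions in your sketch worth flagging. First, you never address the control of \emph{every} orbit that unique ergodicity requires, as opposed to a.e.\ orbit for ergodicity: the twist of a single shear is degenerate on a null set (e.g.\ where $z_1=0$), and the paper handles this by adding extra conjugacies (the blocks indexed $5n-3,\ldots,6n-4$, and Lemma \ref{lemma.start}) so that each $x$ is effectively twisted in some direction; the equidistribution time $M'(x)$ then varies with $x$. Second, the Liouville speed of $t_m\to\alpha$ is not a separate hypothesis one imposes; it is produced automatically by choosing $q_{m+1}$ large relative to the already-fixed $H_m$, exactly as in the standard Anosov--Katok bookkeeping, so there is no independent arithmetic condition to track. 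What cannot be treated as ``routine bookkeeping'' is the construction of entire shears with explicit entire inverses, and that is the part your proposal leaves open.
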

     
\begin{rema} The starting point of our argument is existence of a double transitive  family of 
 entire diffeomorphisms,   commuting with  the $S^1$ action, namely  rotations.  Our argument works whenever such a family exists with  modifications that are essentially notational. Examples are  compact  connected Like groups  and some of their homogeneous spaces. 
 
 Here are more details on the compact Lie group setting. Let $G$ be a compact connected  Lie group with probability Haar measure $\chi$.
We denote by $l_g$ and $r_g$  correspondingly the left and right translation on $G$  by the element $g\in G$.

The group $G$ can be embedded into $SO(N,\R)$ as a subgroup defined by polynomial eguations in the  matrix coefficients. 
Without loss of generality  we may assume that the image of  $G$ is Zariski dense in  $SO(N,\R)$. We consider the standard  coordinate embeddings $SO(N,\R)\hookrightarrow\R^{n^2}\hookrightarrow\C^{n^2}$.

We will call a function on $G$ {\em entire} if it extends to a holomorphic function in $\C^{n^2}$.
Entire maps and diffeormorphisms of $G$ are defined as in the previos section.
Left and right translations are given by linear maps  in matrix coordinates and thus  extend to
invertible linear maps of  $\C^{n^2}$ and are thus entire diffeomorphisms. We use the same notations for the extensions.

\end{rema}     
     
\subsection{The  approxination by conjugation construction scheme}\label{ACscheme}
We will use the {\em approxination by conjugation} method sometimes called 
Anosov--Katok method  which was originally introduced in \cite{AK}.

Without loss of generality we can assume that $t_0$ is rational,
say $t_0=P_0/Q_0$ where $P_0$ and $Q_0$ are relatively prime integers.

We will construct the desired diffeomorphism  $f$ inductively, 
as $\lim_{n\to\infty} F_n$ with $F_{-1}=\varphi^{t_0}$.
Each diffeomorphism $F_n$, $n\geq 0$, will be conjugate via an entire diffeomorphism  $H_n$
to  a rational element of the action $\varphi$ with rapidly increasing periods:
\begin{equation*}
F_n=H_n\circ \varphi^{P_{n+1}/Q_{n+1}}\circ H_n^{-1}
\end{equation*}

The conjugating  diffeomorphisms are  defined inductively as\footnote{In our case, we will actually have instead of the equality that $h_{n}\circ \varphi^{P_n/Q_n} \circ h_n^{-1}$ is close in the $C^\omega_\Delta$ norm to $\varphi^{P_n/Q_n}$ (see section \ref{intro.twist} below).}
$$H_{n}=H_{n-1}\circ h_{n}, \quad {\rm with} \quad \ h_{n}\circ \varphi^{P_n/Q_n}=\varphi^{P_n/Q_n}\circ h_{n}$$
Thus, at the $n^{\rm th}$ step of the
construction the parameters are   the diffeomorphism $h_n\in  {\rm Ent}(\S^{2n-1}, \lambda)$ and the rational $t_{n+1}=P_{n+1}/Q_{n+1}$. First one
chooses the diffeomorphism $h_n$  to make  {\it all orbits} of the $\S^1$ action $\varphi_n$
defined by 
\begin{equation}
\varphi_n = H_n\circ\varphi\circ H_n^{-1}=H_{n-1}\circ h_n\circ \varphi\circ h_n^{-1}\circ H_{n-1}^{-1}
\label{eqinductiveconjugacy}
\end{equation}
distributed in an {\it equivalent way to Lebesgue measure}, in the sense that Birkhoff averages of continuous functions along the $\varphi_n$ action become as $n$ tends to $\infty$  proportional with a fixed ratio distortion to the Lebesgue averages of these functions.  This will be sufficient to guarantee unique ergodicity of the limit map. 

Naturally, $H_n$,  although entire, is likely to have
large derivatives, and in particular to  be very large on $B_\Delta$.
Thus,   $t_{n+1}$  has to be chosen with a sufficiently large denominator $Q_{n+1}$ to make the orbits of the finite subgroup 
$$
H_n\circ\varphi^{kt_{n+1}}\circ H_n^{-1},\,\, k=0,\dots Q_{n+1}-1
$$
of the action $\varphi_n$  approximate the continuous orbits  of $\varphi_n$ sufficiently well
to maintain the uniform distribution  almost without any loss of precision. Moreover, for the convergence of the construction in the analytic norms, observe that  the $\S^1$ action $\varphi_n$  is entire (since $H_n, H_n^{-1}$ and $\varphi$ are entire), thus on every compact subset  of $\C^{2n}$,
$h_n \varphi^{t_{n+1}} h_n^{-1} \to \varphi^{t_n}$ if $t_{n+1}\to t_n$. Hence the latter further constraint on the choice of $t_{n+1}$  will 
 guarantee closeness on $B_\Delta$
between $F_{n+1}$ and $F_n$, and between their inverses.

 Since there are no other restrictions on the choice of $t_{n+1}$ the only essential 
part of the inductive step is  the consturction  of  the diffeomorphism $h_n$.  It is here where the difficuties 
of the analytic case  are  very obvious.  Since those maps are very large in the real domain
control of the complexification presents great problems. A natural approach inspired  by the  smooth case would be to   construct smooth maps first and then to make some kind of 
approximation (by polynomials or other special classes of  functions) to guarantee analyticity in a large domain. The problem however remains since even if such a general approximation procedure works the inverses would have singularities close to the real domain and the construction collapses. Thus it is necessary to find conjugating diffeomorphisms of a special form which  may be inverted  more or less explicitely  to guarantee analyticity of both maps and their inverses in
a large compelx domain. We now proceed to showing how to do this in the specific case in question.

\subsection{Making  $\S^1$ orbits uniformly distributed along a transitive torus action on the sphere} \label{intro.twist}
The action $\varphi$,  which   may of course be considered as a  subgroup of  the orthogonal group $SO(2n)$ ,has  a large centralizer in $SO(2n)$.
An easy way to see it  is to identify  $\R^{2n}$ with $\C^n$  
via the map $(x_1,x_2,\dots, x_{2n-1},x_{2n})\to (x_1+ix_2,\dots x_{2n-1}+ix_{2n})$
In the complex coordinates $\varphi$ becomes scalar action
$\varphi^t(z_1,\dots, z_n)=(\exp2\pi itz_1,\dots,\exp2\pi itz_n)$.
The unitary group $U(n)$ commutes with $\varphi$. For our puposes it is useful to notice that already the  special unitary group $SU(n)$ which has finite intersection with $\varphi$,
already acts transitively of the sphere $\S^{2n-1}$.

Assume we are given a collection of one-parameter compact subgroups of period one $k_0,\ldots,k_N$ acting transitively on $\S^{2n-1}$. Given any $t_n$ we want to construct $h_n \in \text{Ent}(\S^{2n-1},\lambda)$ and $t_{n+1}$ such that  $\bar{F}_n=h_{n} \circ \varphi^{t_{n+1}} \circ h_n^{-1}$ is arbitrarily close to $\varphi^{t_n}$ and such that the arcs of orbits of $\bar{F}_n$ of length $Q_{n+1}$ for any $x \in \S^{2n-1}$, that we denote $\cO(\bar{F}_n,Q_{n+1},x)$, are distributed with high precision in the same way as the $\T^{N+2}$ orbit $\varphi^t k_0^{s_0} \ldots k_{N+1}^{s_{N+1}} y$, $(t,s_0,\ldots,s_{N+1})\in \T^{N+2}$ for some $y \in \S^{2n-1}$ that depends on $x$. The latter distribution is equivalent to Lebesgue measure. The precision with which the orbits $\cO(\bar{F}_n,Q_{n+1},x)$ become distributed as the $\T^{N+1}$ transitive action can be made so high that even after application of the conjugacy $H_{n-1}$ it still holds that the orbits $\cO(F_n,Q_{n+1},x)$ are distributed in an equivalent way to the Lebesgue measure. 

The construction of $h_n$ and $\bar{F}_n$ is itself done using a finite number of successive conjugations of periodic times of the $\varphi$ action. This is the main ingredient in the construction and we now describe it. 

We start with $p_0/q_0=\a_0=t_n$. We consider an entire function $\psi_0$ that is constant on any $k_0$ orbit but such that $\psi_0(\varphi^t(\cdot))$ depends wildly on $t$ and $\psi_0(\varphi^{\a_0}(\cdot))=\psi_0(\cdot)$ (the translation groups $k_i$ are chosen so that such functions do exist and are simple to produce). Then if we let $g_0=k_0^{\psi_0}$  we get that $g_0 \circ \varphi^{\a_0} \circ g_0^{-1} = \varphi_{\a_0}$. As a consequence of our choices, we observe that for $\a_1=p_1/q_1$ sufficiently close to $\a_0$ we have that $f_0=g_0 \circ \varphi^{\a_1} \circ g_0^{-1} $ is close to $f_{-1}=\varphi^{\a_0}$ while due to the twisting of $\psi_0$ under the $\varphi^t$ action the orbits of $f_0$ will be distributed as the continuous $\T^2$ orbits $\varphi^s k_0^{t_0}$, $(s,t_0)\in \T^2$. 

 In the same way we introduce $g_1=k_1^{\psi_1}$ that commutes with $\varphi^{\a_1}$ and then choose $\a_2$ sufficiently close to $\a_1$ so that $f_1=g_0 g_1 \varphi^{\a_2} g_1^{-1}g_0^{-1}$ is close to $f_1$ while the orbits of $f_1$ are distributed as the $\T^3$ orbits  $\varphi^s k_0^{t_0} k_1^{t_1}$, $(s,t_0,t_1)\in \T^3$. We then follow this induction until we obtain $\a_{N+1}$ and $f_N=g_0\ldots g_N \varphi^{\a_{N+1}} g_N^{-1} \ldots g_0^{-1}$ such that $f_N$ is close to $f_{-1}=\varphi^{t_n}$ while its orbits are distributed as the $\T^{N+2}$ orbits of $\varphi^s k_0^{t_0}\ldots k_N^{t_N}$, $(s,t_0,\ldots,t_N)\in \T^{N+2}$. Thus we let $t_{n+1}=\a_{N+1}$, $h_n=g_0\ldots g_N$,  and $\bar{F}_n=f_N$ and the step $n$ construction is accomplished. 
 
 Actually,  in the above scheme we omitted an extra difficulty that is related to the control of {\it every} orbit that is necessary for unique ergodicty. Namely the points $x$ for which the orbit $\cO(f_0,x,q_1)$ is well distributed 
are those for which the $\psi_0$ twist is effective and this excludes a small  measure set of points (suppose for example that $\psi_0$ depends only on the coordinate $z_1$, then $\psi_0(\varphi^t(z))$ does not depend on $t$ for points $z$ such that $z_1=0$). To overcome this difficulty a certain number of additional conjugacies $k_{N+1},\ldots,k_{M}$ must be applied to make sure that  each point is affected by the twist in all the directions $k_0,\ldots,k_N$. A consequence of this extra difficulty is that equi-distribution of different points will happen at different times and for different indices in the maps $f_l$, $l \in [N,M]$.

 \section{Proof of Theorem \ref{theorem.ergodic}}

 \subsection{Criterion for unique ergodicity. Reduction to the main induction step.}

\begin{defin} Given $C>0$ and $\eps>0$, a finite set $\cO$ is said to be $(C,\eps)$-uniformly distributed on a manifold $X$ if for any ball $B\subset X$ of radius $\eps$ we have that $\#(\cO \cap B)/\# \cO \in (\lam(B)/C, C \lam(B))$. 
\end{defin}

\begin{defin} A finite collection of one-parameter compact subgroups of period $1$: $k_0,\ldots, k_N \in SU(n)$ is said to have a transitive action on $X$ if for all $x,y \in X$ there exists $t_0,\ldots,t_N \in [0,1)$ such that $ y=k_0^{t_0} \ldots k_N^{t_N} x.$
\end{defin}

Let $X:=\S^{2n-1}$. In the sequel we will obtain and fix a finite collection of one-parameter compact subgroups of period $1$: $k_0,\ldots, k_N \in SU(n)$ whose action is transitive on $X$.


\begin{defin} A finite set $\cO$ is said to be $\eps$-uniformly distributed along $k_0,\ldots,k_L$ and $x$ if for any ball $B$ of radius $\eps$ in $[0,1]^L$ we have that 
 $\#(\cO \cap \bar{k}^Bx)/\# \cO \in ((1-\eps) {\rm Leb} (B),(1+\eps) {\rm Leb} (B))$. We used the notation $\bar{k}^B x:= \{ y = k_0^{t_0} \ldots k_L^{t_L} x : (t_0,\ldots,t_L) \in B \}$.

\end{defin}

\begin{prop} \label{transitivity} There exists $C_0>0$ such that for any $\eps>0$, there exists $\eta>0$ such that for any $x \in X$ we have the following : If a finite set $\cO$ is $\eta$-uniformly distributed along $\bar{k}_N=k_0,\ldots,k_N$ and $x$, then   $\cO$ is $(C_0,\eps)$-uniformly distributed on $X$.
\end{prop}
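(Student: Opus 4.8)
The plan is to deduce $(C_0,\eps)$-uniform distribution on $X$ from $\eta$-uniform distribution along $\bar k_N$ and $x$ by comparing, via the transitive action map, a small Euclidean ball $B\subset X$ with its preimage in the parameter space $[0,1]^{N+1}$. First I would fix, for each $x\in X$, the map $\Phi_x\colon [0,1]^{N+1}\to X$, $\Phi_x(t_0,\dots,t_N)=k_0^{t_0}\cdots k_N^{t_N}x$, which is surjective by transitivity. The key structural fact to establish is a \emph{uniform bounded distortion / covering} statement: there is a constant $C_0>0$ (depending only on the groups $k_0,\dots,k_N$ and the dimension, not on $x$) such that for every ball $B\subset X$ of radius $\eps$,
\begin{equation*}
\frac{1}{C_0}\,\lambda(B)\ \le\ \mathrm{Leb}\big(\Phi_x^{-1}(B)\big)\ \le\ C_0\,\lambda(B).
\end{equation*}
Since $SU(n)$ acts transitively and isometrically on $X$ and the family $k_0,\dots,k_N$ is fixed, this is a compactness statement: the Jacobian of $\Phi_x$ and the multiplicity of $\Phi_x$ are controlled uniformly in $x$ because everything is real-analytic on a compact manifold and $SU(n)$ acts transitively, so the local picture near any $x$ is a translate of the local picture near a single base point.

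Next, granting this, I would cover $\Phi_x^{-1}(B)$ from inside and outside by small balls in $[0,1]^{N+1}$ of radius $\eta$ (to be chosen): pick $\eta$ small enough, as a function of $\eps$ and the modulus of continuity of $\Phi_x$ (again uniform in $x$ by compactness), that $\Phi_x^{-1}(B)$ contains a union of $\eta$-balls whose total Lebesgue measure is at least $\tfrac12\mathrm{Leb}(\Phi_x^{-1}(B))$ and is contained in a union of $\eta$-balls of total measure at most $2\,\mathrm{Leb}(\Phi_x^{-1}(B))$; a Vitali-type / dyadic covering of the parameter cube does this with overlap multiplicity bounded by a dimensional constant. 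Applying the hypothesis that $\cO$ is $\eta$-uniformly distributed along $\bar k_N$ and $x$ to each of these $\eta$-balls, and summing, gives
\begin{equation*}
\frac{\#(\cO\cap \bar k^{\,\Phi_x^{-1}(B)}x)}{\#\cO}\ \in\ \Big(\tfrac12(1-\eta)\,\mathrm{Leb}(\Phi_x^{-1}(B)),\ 2(1+\eta)\,\mathrm{Leb}(\Phi_x^{-1}(B))\Big),
\end{equation*}
and since $\bar k^{\,\Phi_x^{-1}(B)}x=B$, combining with the distortion bound yields $\#(\cO\cap B)/\#\cO\in(\lambda(B)/C_0',\,C_0'\lambda(B))$ for a suitably enlarged absolute constant $C_0'$, which is the claim after renaming.

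The main obstacle is the uniform two-sided Jacobian/multiplicity bound for $\Phi_x$, i.e. showing $C_0$ can be chosen independent of $x$: one must rule out that $\Phi_x$ degenerates (Jacobian collapsing, or wild non-injectivity inflating the preimage measure) as $x$ varies. Here one uses that left translation by a group element conjugates $\Phi_x$ to $\Phi_{y}$ composed with a fixed isometry — more precisely, writing $y=gx$ with $g\in SU(n)$, the relation between $\Phi_x$ and $\Phi_{gx}$ is governed only by the adjoint action of $g$ on the fixed subgroups, which ranges over a compact set — so the relevant quantities are continuous functions on a compact set and hence bounded above and below; transitivity of $SU(n)$ guarantees every $x$ is reached. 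The one genuinely delicate point is that $\Phi_x$ is generically not a local diffeomorphism when $N+1>2n-1$ (the parameter space has larger dimension than $X$), so "Jacobian" must be read as the coarea factor $\sqrt{\det(D\Phi_x D\Phi_x^{\!\top})}$ and one needs this to stay bounded away from $0$ on a set of full measure in the cube, with the coarea formula supplying the lower bound $\mathrm{Leb}(\Phi_x^{-1}(B))\ge \lambda(B)/C_0$; transitivity ensures the coarea factor is positive a.e., and compactness upgrades this to a uniform bound.
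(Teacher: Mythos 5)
Your plan — push the count on a ball $B\subset X$ back to parameter space via $\Phi_x(\vec t)=k_0^{t_0}\cdots k_N^{t_N}x$, compare with $\mathrm{Leb}(\Phi_x^{-1}(B))$, and then invoke a uniform two‑sided distortion bound — is a natural way to flesh out the paper's one‑line appeal to ``transitivity, periodicity, compactness,'' but as written it has two genuine gaps, one in each half of the argument.

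\textbf{The lower bound does not follow from summing over a cover.} You pack $\Phi_x^{-1}(B)$ with disjoint $\eta$‑balls $B_1,\dots,B_m$ of total measure $\gtrsim\mathrm{Leb}(\Phi_x^{-1}(B))$, apply the hypothesis to each, and sum. This yields $\sum_i\#(\cO\cap\bar k^{B_i}x)\gtrsim(1-\eta)\sum_i\mathrm{Leb}(B_i)\,\#\cO$, but that sum is \emph{not} a lower bound for $\#(\cO\cap B)$: disjointness of the $B_i$ in $[0,1]^{N+1}$ does not make the images $\bar k^{B_i}x$ disjoint in $X$, and since $N+1>2n-1=\dim X$ the map $\Phi_x$ collapses $N+1-(2n-1)$ dimensions, so a single point $p\in\cO$ can lie in $\bar k^{B_i}x$ for an unbounded number of indices $i$. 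You would need a packing of $\eta$‑balls with \emph{pairwise disjoint images}, and that runs into the dimensional mismatch head‑on: each $\eta$‑ball contributes $\sim\eta^{N+1}$ to the sum while its image has $\lambda$‑measure $\lesssim\eta^{2n-1}$, so packing the $\eps$‑ball $B$ with such images gives at most $\sim\eps^{2n-1}\cdot\eta^{N+1-(2n-1)}$, which is $o(\lambda(B))$ as $\eta\to 0$. One cannot get a constant $C_0$ independent of $\eps$ (hence of $\eta$) this way. The upper bound half of your covering argument is fine (covers of preimages give covers of $B$), but the lower bound needs a completely different mechanism.

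\textbf{The ``uniform bounded distortion'' claim is not justified by compactness.} You assert that the coarea factor $\sqrt{\det(D\Phi_x D\Phi_x^{\!\top})}$ is positive a.e.\ and that ``compactness upgrades this to a uniform bound.'' That inference is false: a continuous nonnegative function that vanishes somewhere on a compact set has infimum zero. And the coarea factor \emph{does} vanish here: each one‑parameter group $\xi_j$ has fixed set $\{z_j=0\}$ and each $\tau_j$ has fixed set $\{z_1+z_j=0\}$, both of real codimension $2$ and nonempty in $\S^{2n-1}$; at parameter values where the partial product $k_i^{t_i}\cdots k_N^{t_N}x$ lands on the fixed locus of $k_{i-1}$ the $t_{i-1}$‑direction of $D\Phi_x$ dies, so $\Phi_x$ is not a submersion there. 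In particular the density $\rho_x=\frac{d(\Phi_x)_*\mathrm{Leb}}{d\lambda}$ can blow up near such configurations, and then $\mathrm{Leb}(\Phi_x^{-1}(B))/\lambda(B)$ is \emph{not} bounded above uniformly as $\eps\to 0$. Controlling this requires an actual estimate exploiting the redundancy of the family (the deliberately repeated $\tau_j,\xi_j$ blocks), not a generic compactness appeal; this is precisely where the ``straightforward'' claim hides real work.
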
 

\begin{proof} The proof is straightforward by transitivity and periodicity of the action by $k_0,\ldots,k_N$ and compactness of $X$.  \end{proof}

We now state a general criterion  that we will use to  prove unique ergodicity of a volume preserving  transformations $f$. For $x \in X$ and $m \in \N$, we denote the arcs of orbits $\cO(f,M,x):=\{f^m(x) : m=1,\ldots,M \}$.

\begin{prop} \label{criterion} Let $f $ be a volume preserving homeomorphism on $X$. Assume that  there exists $C>0$ such that the following holds : For any $\eps>0$ and any $x \in X$, there exists $M \in \N$, for which $\cO(f,M,x)$ is $(C,\eps)$-uniformly distributed on $X$. Then $f$ is  uniquely ergodic.
\end{prop}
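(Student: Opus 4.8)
The plan is to deduce unique ergodicity from the uniform-distribution hypothesis by testing against continuous functions, exploiting the well-known equivalence between unique ergodicity and uniform convergence of Birkhoff averages to a constant. Let $g\in C(X)$; I want to show that $\frac1M\sum_{m=1}^{M}g(f^m x)$ converges, as $M\to\infty$, to $\int_X g\,d\lambda$, uniformly in $x$. Since $g$ is uniformly continuous on the compact space $X$, fix $\eps>0$ and choose $\delta>0$ so that $d(y,y')<\delta$ implies $|g(y)-g(y')|<\eps$; also set $\|g\|_\infty=:K$. The hypothesis, applied with this $\eps$ (really with an auxiliary radius comparable to $\delta$), gives a constant $C>0$ and, for each $x$, some $M=M(x,\eps)$ such that the orbit arc $\cO(f,M,x)$ is $(C,\eps)$-uniformly distributed on $X$.

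The core computation is to compare the empirical average $\frac1M\sum_{m=1}^M g(f^m x)$ with $\int g\,d\lambda$ using a partition of $X$ into small pieces. Cover $X$ by finitely many balls $B_1,\dots,B_r$ of radius $\eps$ and pass to a disjoint measurable refinement $\{A_i\}$ with $\lambda(\partial A_i)=0$ and $\operatorname{diam}A_i<2\eps$. On each $A_i$ the oscillation of $g$ is at most $\eps$ (after shrinking $\eps$ at the outset to absorb the factor of $2$), so both $\frac1M\#(\cO\cap A_i)\cdot g(y_i)$ for a chosen $y_i\in A_i$ and $\int_{A_i}g\,d\lambda$ are within $\eps\,\lambda(A_i)$-type errors of the ``true'' contributions; summing, $\bigl|\frac1M\sum g(f^m x)-\sum_i g(y_i)\,\frac1M\#(\cO\cap A_i)\bigr|\le\eps$ and $\bigl|\int g\,d\lambda-\sum_i g(y_i)\lambda(A_i)\bigr|\le\eps$. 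Now the $(C,\eps)$-uniform distribution says $\frac1M\#(\cO\cap A_i)$ and $\lambda(A_i)$ differ only up to the multiplicative factor $C$, i.e.\ $\bigl|\frac1M\#(\cO\cap A_i)-\lambda(A_i)\bigr|\le (C-1)\lambda(A_i)$ on each piece; hence $\bigl|\sum_i g(y_i)(\frac1M\#(\cO\cap A_i)-\lambda(A_i))\bigr|\le (C-1)K$. Combining the three estimates yields $\bigl|\frac1M\sum_{m=1}^M g(f^m x)-\int_X g\,d\lambda\bigr|\le 2\eps+(C-1)K$, uniformly in $x$.

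The main obstacle is precisely the last display: the hypothesis only gives a \emph{fixed} constant $C>1$, not $C$ arbitrarily close to $1$, so the naive estimate does not go to zero. The fix is a standard bootstrapping: one shows the constant $C$ can be improved to $1+o(1)$ along suitable time scales by iterating the argument, or --- cleaner --- one argues at the level of invariant measures directly. Let $\mu$ be any $f$-invariant Borel probability measure; by the Krylov--Bogolyubov picture, $\mu$ is a weak-$*$ limit of empirical measures $\frac1{M_j}\sum_{m=1}^{M_j}\delta_{f^m x_j}$. For a fixed ball $B$ of radius $\eps$, the $(C,\eps)$-uniform distribution bound $\frac1{M}\#(\cO(f,M,x)\cap B)\in(\lambda(B)/C,\,C\lambda(B))$ holds for $M=M(x,\eps)$ and, by invariance/telescoping, for all sufficiently large $M$ with the same two-sided bound up to a negligible correction; passing to the limit gives $\mu(B)\in[\lambda(B)/C,\,C\lambda(B)]$ for every such $B$, hence for all Borel $B$ by regularity. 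Thus $\mu$ is equivalent to $\lambda$ with Radon--Nikodym derivative bounded between $1/C$ and $C$. Finally, since $f$ preserves $\lambda$, any such $\mu$ is an $f$-invariant measure absolutely continuous with respect to the ergodic decomposition of $\lambda$; running the same two-sided bound for \emph{every} $\eps$ forces $\mu=\lambda$ (the bound $\mu(B)/\lambda(B)\in[1/C,C]$ is not yet enough, but combined with invariance one shows the density is $f$-invariant and, by the uniform-distribution hypothesis applied along orbits in a set of full $\mu$-measure, almost everywhere constant, hence $\equiv1$). Therefore $\lambda$ is the unique $f$-invariant probability measure and $f$ is uniquely ergodic. $\hfill\square$
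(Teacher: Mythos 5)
Your overall strategy is the same as the paper's (show every $f$-invariant probability measure is equivalent to $\lambda$, then deduce unique ergodicity), but two steps in the execution do not hold up as written.

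First, the step you label ``invariance/telescoping.'' The hypothesis gives the two-sided bound for a \emph{specific} time $M(x,\eps)$, not for all large $M$, and nothing in the hypothesis controls the sizes of the gaps between successive good times along an orbit. What is true, and all that is needed, is the weaker statement that for a fixed $x$ one has the bound along \emph{some} sequence $M_n\to\infty$ (take $\eps_n\to0$ and $M_n=M(x,\eps_n)$; $M_n\to\infty$ because $\cO(f,M_n,x)$ must meet every ball of radius $\eps_n$). Then restrict to an \emph{ergodic} $\mu$ and a $\mu$-generic $x$: the empirical measures $\frac1M\sum_{m=1}^M\delta_{f^m x}$ converge to $\mu$ along the full sequence $M\to\infty$, so in particular along the subsequence $M_n$, where the two-sided bound holds. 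That already gives $\mu(B)\in[\lambda(B)/C',C'\lambda(B)]$ for small balls and hence $\mu\sim\lambda$. (This is exactly what the paper does, phrased with Birkhoff averages of continuous $\psi$ rather than with balls; the ball version also needs a covering remark to pass from radius-$\eps_n$ balls to a fixed ball, at the cost of a dimensional constant.) Your ``for all sufficiently large $M$ up to a negligible correction'' overclaims; drop it and argue along a subsequence.

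Second, your closing parenthetical --- that $\mu\sim\lambda$ with bounded density ``is not yet enough,'' and that one must further show the Radon--Nikodym density is a.e.\ constant --- misidentifies the finish line and introduces a circularity (the density being constant is equivalent to ergodicity of $\lambda$, which is part of what we are proving). The bound \emph{is} enough: if every ergodic invariant measure is equivalent to $\lambda$, then any two ergodic measures are equivalent to each other; but distinct ergodic measures are mutually singular, so there is exactly one ergodic measure, hence (by the ergodic decomposition) a unique invariant probability measure, which is $\lambda$. This is the ``whence unique ergodicity'' step in the paper, and it requires no density computation. Replacing your last two sentences by this mutual-singularity argument, and the telescoping claim by the subsequence argument above, gives a correct proof along essentially the paper's lines.
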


\begin{proof} The assumption implies that given any continuous function $\psi : X \to \C$, and any $x \in X$, there exists a sequence $M_n \to \infty$ such that 
$$\frac{1}{M_n} \sum_{i=1}^{M_n-1} \psi (f^i x) \in (\int_X \psi(z)d\lambda(z) /C',C' \int_X \psi(z)d\lambda(z) )$$ with $C'=2C$. It follows that all the invariant probability measures by $f$ are equivalent to Lebesgue, whence unique ergodicty.
\end{proof}

We can now state what we will request at a given step of our construction to guarantee unique ergodicity of the limiting transformation. 

\begin{prop} \label{main} If for any $t_0 \in \T$ and any $\eps >0$ and  $\Delta >0$ there exists $\tau \in \T$ and a diffeomorphism $h \in Ent(X,\lam)$ such that the entire diffeomorphism $f=h \circ \varphi^{\tau} \circ h^{-1}$ satisfies the following 
\begin{itemize} 
\item  ${|f - \varphi^{t_0}|}_\Delta < \eps$ 
\item There exists  $M \in \N$ with the property that for every $x \in X$, there exist $y \in X$ and $M'(x)< M$ such that $\cO(f,M',x)$ is $\eps$-uniformly distributed  along $k_0,\ldots,k_N$ and $y$ 
\end{itemize}
Then it is possible to construct a transformation that satisfies the requirements of Theorem \ref{theorem.ergodic}.
\end{prop}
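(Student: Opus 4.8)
The plan is to run the Anosov--Katok inductive scheme described in Section~\ref{ACscheme}, feeding it at each step the output of Proposition~\ref{main}, and then to verify convergence in $C^\omega_\Delta$ together with the unique ergodicity criterion of Proposition~\ref{criterion}. Concretely, I would construct sequences $\tau_n=P_{n+1}/Q_{n+1}\in\Q\cap\T$, diffeomorphisms $h_n\in\mathrm{Ent}(X,\lambda)$, and set $H_n=H_{n-1}\circ h_n$ (with $H_{-1}=\mathrm{Id}$), $F_n=H_n\circ\varphi^{\tau_n}\circ H_n^{-1}$, starting from $F_{-1}=\varphi^{t_0}$. At step $n$, given the already-fixed entire diffeomorphism $H_{n-1}$ and a target time $t_n$ (which will be roughly $\tau_{n-1}$), I would apply Proposition~\ref{main} to the situation conjugated back by $H_{n-1}$: choose $\eps_n\to 0$, choose $\Delta_n$ large enough to dominate the distortion of $H_{n-1}$ on $B_\Delta$, and obtain from the proposition a time $\tau_n$ and a diffeomorphism $h_n$ such that $\bar F_n:=h_n\circ\varphi^{\tau_n}\circ h_n^{-1}$ is $\eps_n$-close to $\varphi^{t_n}$ on the relevant ball and has the orbit-distribution property: every $x$ has an orbit arc $\cO(\bar F_n,M'(x),x)$ with $M'(x)<M_n$ that is $\eps_n$-uniformly distributed along $k_0,\dots,k_N$ and some $y=y(x)$.

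The two things that then need to be checked are convergence and unique ergodicity. For convergence: since $H_{n-1}$ and $H_{n-1}^{-1}$ are entire and fixed before $\tau_n$ is chosen, and since $\varphi_{n-1}=H_{n-1}\circ\varphi\circ H_{n-1}^{-1}$ depends entirely (hence continuously on the relevant compact subsets of $\C^{2n}$) on its time parameter, I can shrink the difference $|F_n-F_{n-1}|_\Delta$ below $\eps/2^{n+1}$ by first using the first bullet of Proposition~\ref{main} to control $|\bar F_n-\varphi^{t_n}|_{\Delta_n}$ and then absorbing the conjugation by $H_{n-1}$; the telescoping sum then forces $F_n\to f$ in $C^\omega_\Delta$ and $F_n^{-1}\to f^{-1}$ likewise, so $f\in C^\omega_\Delta(X)$, $f$ is volume preserving as a limit of volume-preserving maps, and $|f-\varphi^{t_0}|_\Delta<\eps$ provided $|F_0-\varphi^{t_0}|_\Delta$ was taken $<\eps/2$. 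This is the standard Cauchy-sequence-in-the-analytic-norm argument; the only genuine point is that the choice of $\tau_n$ is \emph{free} once $h_n$ is fixed, so there is no circularity.

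For unique ergodicity I would verify the hypothesis of Proposition~\ref{criterion} with a uniform constant $C$. Fix $x\in X$ and $\eps>0$. For $n$ large, the orbit arc $\cO(F_n,M'(x),x)$ is the $H_{n-1}$-image of an orbit arc of $\bar F_n$, which by construction is $\eps_n$-uniformly distributed along $k_0,\dots,k_N$ about some point; applying Proposition~\ref{transitivity} (with $\eta=\eta(\eps)$ chosen so that $\eta$-distribution along the $k_i$ forces $(C_0,\eps)$-distribution on $X$) and then pushing forward by the volume-preserving $H_{n-1}$, the arc $\cO(\bar F_n,M'(x),x)$ is $(C_0,\eps')$-uniformly distributed on $X$ for a suitable $\eps'$. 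The remaining — and most delicate — step is to transfer this from $\bar F_n$ (or $F_n$) to the limit $f$: one must choose the index $n$ and the parameters $\eps_n,\Delta_n,M_n,Q_{n+1}$ so that the $C^\omega_\Delta$-closeness of $f$ to $F_n$ (equivalently $C^0$-closeness on the real sphere, which is $C^0$-close to $\bar F_n$ after conjugation) persists over the \emph{whole} orbit arc of length $M'(x)<M_n$ — i.e. the iterates $f^i$ and $\bar F_n^i$ (resp. $F_n^i$) stay $\eps$-close for $0\le i\le M_n$. This is exactly the classical requirement that $Q_{n+1}$ (hence $M_n$) be chosen after $h_n$ but the closeness of subsequent $F_m$, $m>n$, be chosen afterwards to beat the Lipschitz constant of $F_n^{M_n}$; it forces the inductive choices to be made in the order $h_n\to M_n\to(\text{constraints on }\tau_n)\to(\text{constraints on }h_{n+1},\tau_{n+1},\dots)$. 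Once this ordering is respected, $\cO(f,M'(x),x)$ is $(C,\eps)$-uniformly distributed on $X$ for a fixed $C$ (say $C=2C_0$), uniformly in $x$, and Proposition~\ref{criterion} yields unique ergodicity; since $\lambda$ has full support, unique ergodicity gives minimality, and the construction matches the ``furthermore'' clause of Theorem~\ref{theorem.ergodic} by design. The main obstacle, then, is not any single estimate but the bookkeeping of this order of choices so that analytic convergence and orbit-distribution control over growing time windows are simultaneously achievable; everything else is routine.
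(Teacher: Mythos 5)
Your proposal is correct and follows essentially the same route as the paper's proof: the paper formalizes the bookkeeping you describe via an inductive hypothesis $(\cH_n)$ asserting both ${|F_n-F_{n-1}|}_\Delta<\eps/2^n$ and, for every $j\leq n$, the $(C_0,1/(j+1))$-uniform distribution of $\cO(F_n,M'_j(x),x)$; the closeness of $F_{n+1}$ to $F_n$ is taken small enough (after $H_n$ is fixed and using Proposition~\ref{transitivity}) to preserve all earlier distribution properties and push them to the limit, exactly as you outline.
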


\begin{proof} Assume that sequences $M_n\in \N$, $t_n$ and $H_n \in Ent(X,\lam)$ have been constructed such that $H_{-1}={\rm Id}$ and 
$F_n= H_n \circ \varphi^{t_{n+1}} \circ H_n^{-1}$ satisfies 
 
 \noindent {\it  $(\cH_n)$ : ${|F_n-F_{n-1}|}_\Delta<\eps/2^n$ for any $n\geq 0$; and for any $0\leq j \leq n$, and any $x \in X$, there exists $M'_j(x)<M_j$ such that $\cO(F_n,M'_j,x)$ is $(C_0,1/(j+1))$-uniformly distributed in $X$.}

 Clearly the first step $n=0$ follows from Proposition~\ref{transitivity} and the assumption. At step $n$, given $H_n$, observe that there exists $\eps_n$ such that if $h_{n+1}$ and $t_{n+2}$ and 
 $M_{n+1}$ are such that 
  $\bar{f}_{n+1}=h_{n+1} \circ \varphi^{t_{n+2}} \circ h_{n+1}^{-1}$ satisfies that  for any $x \in X$  there exists $y$ and $M'_{n+1}(x) < M_{n+1} $      such that $\cO(\bar{f}_{n+1},\bar{M}'_{n+1},x)$ is  $\eps_n$-uniformly distributed  along $k_0,\ldots,k_N$ and $y$, then, using Proposition \ref{transitivity},  $F_{n+1}= H_n \circ \bar{f}_{n+1} \circ H_n^{-1}$ satisfies that   $\cO(F_{n+1},M'_{n+1}(x),x)$ is  $(C_0,1/(n+2))$-uniformly distributed in $X$ for any $x \in X$ (we took $M'_{n+1}(x)=\bar{M}'_{n+1}(H_n^{-1}(x))<M_{n+1}$). In addition we can require due to our assumption  that $\bar{f}_{n+1}$ be arbitrarily close to $\varphi^{t_{n+1}}$. As a consequence of the latter $F_{n+1}$ will be arbitrarily close to $F_{n}$ and the requirements of 
   $(\cH_{n+1})$ will hold if we take $H_{n+1}=H_n \circ h_{n+1}$. 
  
The limiting diffeomorphism $f=\lim F_n$ thus satisfies that   $\cO(f,M'_{j}(x),x)$  is $(C_0,1/(j+1))$-uniformly distributed in $X$. The
unique ergodicity criterion of Proposition \ref{criterion}  being  satisfied by $f$, Theorem \ref{theorem.ergodic} follows.
\end{proof}

It only remains  to prove the main inductive step given by Proposition \ref{main}.  Before we do this we shall introduce now the special translations that we will use in order to move the orbits transversally to the $\varphi$-action.

\subsection{A special family of translations}\label{sec.translations} For any $q \in \N$ and $i \in \{1,\ldots,n\}$, we define $\psi_{i,q}(z)={\rm Re}(z_{i}^q)$ and $\chi_{i,q}(z)={\rm Re}((z_1-z_{i})^q)$. Clearly $\psi_{i,q}$ and $\chi_{i,q}$ are entire, since they are   polynomials in the variables $x_1,\ldots,x_{2n}$. A crucial property is that $g \circ \varphi^{p/q}= g$ for $g=\psi_{i,q}$ or $\chi_{i,q}$. 

The translations we will use are 
\begin{align*} \xi_{i}^s(z_1,\ldots,z_n)&=(z_1,\ldots,z_{i-1},e^{i2\pi s}z_i,z_{i+1},\ldots,z_n)\\
\tau_{i}^s(z_1,\ldots,z_n)&=(z_{1,s},z_2,\ldots,z_{i-1},z_{i,s},z_{i+1},\ldots,z_n) \\
 z_{1,s}&=1/2\left((e^{i2\pi s}+1)z_1+(e^{i2\pi s}-1)z_i   \right)  \\ z_{i,s}&=1/2\left((e^{i2\pi s}-1)z_1+(e^{i2\pi s}+1)z_i   \right) \end{align*}
Note that under the action by $\tau_i^s$ we have that $z_{1,s}+z_{2,s}=e^{i2\pi s}(z_1+z_2)$ while $z_{1,s}-z_{2,s}=(z_1-z_2)$. This is crucial to insure that $\left(\xi_i^{A\psi_{j,q}}\right)^{-1}= \xi_i^{-A\psi_{j,q}}$   and the similar property for $\tau_i^{A\chi_{i,q}}$.  Also, as a consequence of our definitions we have that  for any $A>0$
\begin{align*} \xi_i^{A\psi_{j,q}} \varphi_{p/q} z&= \varphi_{p/q}  \xi_i^{A\psi_{j,q}} z,  \forall j\neq i \\
 \tau_i^{A\chi_{i,q}} \varphi_{p/q} z&=\varphi_{p/q}  \tau_i^{A\chi_{i,q}}z
\end{align*}
Observe finally that $\xi_i^{A\psi_{j,q}}$ and $\xi_i^{-A\psi_{j,q}}$  are entire maps as well as $\tau_i^{A\chi_{i,q}}$
and $\tau_i^{-A\chi_{i,q}}$.

\begin{prop} \label{prop.transitive.sequence}  Let $k_0=\xi_1, k_1=\tau_2,k_2=\xi_2,k_3=\xi_3,\ldots, k_n=\xi_n$, $k_{n+1}=\tau_2,k_{n+2}=\xi_2,k_{n+3}=\tau_3,k_{n+4}=\xi_3,\ldots,k_{3n-2}=\tau_n,k_{3n-1}=\xi_n$, $k_{3n}=\tau_2, k_{3n+1}=\xi_{2},k_{3n+2}=\tau_{3}, k_{3n+3}=\xi_{3},\ldots,k_{5n-4}=\tau_n,k_{5n-4}=\xi_n$. Then the sequence $k_0,\ldots,k_{5n-4}$ is transitive.
\end{prop}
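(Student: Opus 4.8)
The plan is to show transitivity of the composed action by exhibiting, for any two points $x,y\in\S^{2n-1}\subset\C^n$, an explicit product of the $\xi_i$'s and $\tau_i$'s carrying $x$ to $y$. It is convenient to work in the complex coordinates $z=(z_1,\dots,z_n)$ with $|z_1|^2+\dots+|z_n|^2=1$, and to think of the target as a fixed reference point, say $e_1=(1,0,\dots,0)$: since the group generated by these maps is a group, it suffices to prove that for every $x$ there is a word in $k_0,\dots,k_{5n-4}$ sending $x$ to $e_1$ (and then to apply the word-for-$y$ inverted). The building blocks act as follows: $\xi_i^s$ rotates the phase of the coordinate $z_i$, so it can adjust $\arg z_i$ to any prescribed value without changing any modulus; while $\tau_i^s$ is the ``rotation in the $(z_1,z_i)$-plane'' written in the $(z_1+z_i,\,z_1-z_i)$ frame — it multiplies $z_1+z_i$ by $e^{i2\pi s}$ and fixes $z_1-z_i$, hence it transfers ``mass'' between the $z_1$- and $z_i$-slots while preserving $|z_1|^2+|z_i|^2$ and the rest of the coordinates.

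The key steps, in order, are: \emph{(1)} Using $k_1=\tau_2$ together with $k_0=\xi_1$ and $k_2=\xi_2$, show that the pair $(z_1,z_2)$ can be moved, within the sphere of radius $(|z_1|^2+|z_2|^2)^{1/2}$ in $\C^2$, to put all of that $\ell^2$-mass into the first coordinate: first use $\xi$'s to make $z_1,z_2$ real and suitably signed so that the real $2\times 2$ rotation underlying $\tau_2$ (at an appropriate angle $s$) sends $(z_1,z_2)$ to $(\sqrt{|z_1|^2+|z_2|^2},0)$; the identities $z_{1,s}+z_{2,s}=e^{i2\pi s}(z_1+z_2)$, $z_{1,s}-z_{2,s}=z_1-z_2$ make this a short trigonometric computation. \emph{(2)} Iterate this ``draining'' step down the list $k_3=\xi_3,\dots,k_n=\xi_n$ paired with the later blocks $k_{n+1}=\tau_2,k_{n+2}=\xi_2,k_{n+3}=\tau_3,\dots$: after the $j$-th round, coordinates $z_2,\dots,z_{j}$ have been zeroed out and all the mass sits in $z_1$. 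Concretely, to kill $z_{j}$ one first uses $\xi_{j}$ (and $\xi_2$, $\xi_1$) to fix phases, then $\tau_{j}$ to rotate the mass of $z_{j}$ into $z_1$; the already-zeroed coordinates are untouched because $\tau_{j}$ only mixes slots $1$ and $j$. \emph{(3)} After the full pass, $x$ has been carried to $(\lambda,0,\dots,0)$ with $\lambda$ a unit complex number; one last $\xi_1$ makes $\lambda=1$. \emph{(4)} Count: the list $k_0,\dots,k_{5n-4}$ contains three blocks, the first of length $n+1$ and the next two of length $2n-2$ each, which is exactly enough to perform the ``make-real / drain'' operation once per coordinate $j=2,\dots,n$ with the extra $\xi$'s available for the phase corrections; the triple repetition is the slack that guarantees we never run out of generators regardless of which coordinates happen to vanish along the way. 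Finally, for arbitrary $y$ compose the word sending $x\mapsto e_1$ with the inverse of the word sending $y\mapsto e_1$, and re-expand that inverse as a (longer, but still within the listed generators after the repetitions) word in the $k_\ell$.

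The main obstacle is bookkeeping rather than any genuine analytic difficulty: one must check that the \emph{specific} ordering and the \emph{specific} finite length $5n-4$ of the list in the statement always suffice, i.e. that the phase-correcting $\xi_i$'s needed before each $\tau_j$-draining step are actually the ones occurring at those positions in $k_0,\dots,k_{5n-4}$, and that degenerate configurations — some $z_i=0$ at the moment we want to act, or $z_1=0$ so that a needed phase is ill-defined — can always be repaired using the redundant second and third blocks. I would handle the degeneracies uniformly by noting that if $z_1=0$ one may first apply a single $\tau_j$ (for any $j$ with $z_j\neq0$) to make $z_1\neq0$, then proceed; this is precisely why the pattern $\tau_2,\xi_2,\tau_3,\xi_3,\dots$ is repeated. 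Everything else is the elementary fact that $SU(2)$ (here realized by the $\langle\xi_1,\xi_2,\tau_2\rangle$-type moves on each pair $(z_1,z_j)$) acts transitively on $\S^3$, chained $n-1$ times.
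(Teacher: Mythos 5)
There is a genuine gap at the very first step of your plan. In this paper ``transitive'' is defined (see the relevant definition before Proposition~\ref{transitivity}) to mean that for all $x,y\in X$ there exist $t_0,\dots,t_N\in[0,1)$ with $y=k_0^{t_0}k_1^{t_1}\cdots k_N^{t_N}x$, i.e.\ a \emph{single ordered word in which each $k_i$ is raised to one parameter and used exactly once, in the prescribed order}. Your reduction --- show that each $x$ can be brought to $e_1$, then concatenate the word for $x$ with the inverse of the word for $y$ --- produces a word of the form $k_{5n-4}^{-s_{5n-4}}\cdots k_0^{-s_0}\,k_0^{t_0}\cdots k_{5n-4}^{t_{5n-4}}$, which is not of the required form; it uses each generator twice and in the wrong order, and the remark at the end that one can ``re-expand that inverse as a longer word'' does not rescue this, since the definition does not allow repetitions or reordering. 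The group generated by the $k_i$ being transitive is strictly weaker than what Proposition~\ref{prop.transitive.sequence} asserts, and the downstream use (Proposition~\ref{transitivity}, where one needs every orbit $\bar k^{[0,1)^{N+1}}x$ to cover $X$) genuinely requires the strong version. The paper's proof avoids this entirely: Lemma~\ref{lemme.transitif.2} shows that the ordered word $k_{n+1}^{s_{n+1}}\cdots k_{5n-4}^{s_{5n-4}}$ alone can take any $z$ to any prescribed \emph{moduli} $(\rho_1,\dots,\rho_n)$ (by first draining all mass into the first slot and then redistributing it slot by slot), and then the leading block $k_0^{s_0}\cdots k_n^{s_n}$, consisting of the $\xi_i$'s (with $k_1=\tau_2$ given $s_1=0$), is used last to fix the remaining \emph{arguments}. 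No reduction to a reference point and no inversion is needed.

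A secondary, smaller issue: in your step (1) you propose to use $\xi$'s to make \emph{both} $z_1$ and $z_j$ real before applying $\tau_j$. This requires $\xi_1$, which occurs only once in the sequence (as $k_0$); you cannot afford a $\xi_1$ phase-correction before every $\tau_j$. The paper's Lemma~\ref{lemme.transitif.1} sidesteps this by pairing $\tau_j$ only with $\xi_j$: one adjusts the phase of $z_j$ relative to that of $z_1$ and then tunes the $\tau_j$-angle, which already achieves any split of $|z_1|^2+|z_j|^2$ between the two slots without touching $z_1$'s phase. That is the precise form of the chained $SU(2)$-transitivity you allude to, and it is what makes the count of $5n-3$ ordered generators work out.
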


\begin{rema} The translation $k_1=\tau_2$ is not necessary in making the sequence transitive but will be useful later when we will build the conjugacy to make sure that the shear along the $z_1$ direction is triggered.
\end{rema} 

The proof of Proposition \ref{prop.transitive.sequence} will be an immediate consequence of lemma \ref{lemme.transitif.2} below.

\begin{lemm} \label{lemme.transitif.1} Fix $j=1,\ldots,n$ and $(z_1,\ldots,z_n)\in X$. For any $\rho_1,\rho_j$ such that $\rho_1^2+\rho_j^2=|z_1|^2+|z_j|^2$ there exist $t,s \in [0,1]^2$ such that $z'=\tau_j^s\xi_j^t z$ satisfies $|z'_1|=\rho_1$ and $|z'_j|=\rho_j$.
\end{lemm}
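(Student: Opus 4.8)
\textbf{Proof proposal for Lemma \ref{lemme.transitif.1}.}

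The plan is to reduce the statement to a two–dimensional elementary computation in the coordinates $(z_1,z_j)$, all other coordinates being fixed by both $\xi_j^t$ and $\tau_j^s$. Write $z_1 = r_1 e^{i\theta_1}$ and $z_j = r_j e^{i\theta_j}$, and set $R^2 := r_1^2 + r_j^2 = |z_1|^2 + |z_j|^2$, which is a conserved quantity: indeed $\xi_j^t$ multiplies $z_j$ by a unimodular constant (so it preserves $|z_1|$ and $|z_j|$ separately), while the defining relations of $\tau_j^s$ give $z_{1,s} + z_{j,s} = e^{i2\pi s}(z_1+z_j)$ and $z_{1,s} - z_{j,s} = z_1 - z_j$, from which one checks $|z_{1,s}|^2 + |z_{j,s}|^2 = \tfrac12|z_1+z_j|^2 + \tfrac12|z_1-z_j|^2 = |z_1|^2 + |z_j|^2$. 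So the target sphere $\rho_1^2 + \rho_j^2 = R^2$ is exactly the orbit of the conserved quantity, and the claim is that the two-parameter family $\tau_j^s \xi_j^t$ realizes every admissible pair of moduli.

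First I would use $\xi_j^t$ to adjust the relative phase: applying $\xi_j^t$ replaces $(z_1,z_j)$ by $(z_1, e^{i2\pi t} z_j)$, so by choosing $t$ appropriately we may assume that $\arg z_1 = \arg z_j$, i.e. $z_1 = r_1 e^{i\theta}$, $z_j = r_j e^{i\theta}$ with the same $\theta$ — unless one of $r_1, r_j$ is zero, a degenerate case I will treat separately below. Now apply $\tau_j^s$: since $z_1$ and $z_j$ are positive real multiples of the common unit vector $e^{i\theta}$, the formulas for $z_{1,s}, z_{j,s}$ show that $|z_{1,s}|^2$ equals $\tfrac14|(e^{i2\pi s}+1)r_1 + (e^{i2\pi s}-1)r_j|^2$. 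Expanding, with $c := \cos 2\pi s$, this is a function of the form $\tfrac12\big(R^2 + (r_1^2 - r_j^2)\, c + 2 r_1 r_j \sin 2\pi s\big)$ after collecting terms; as $s$ ranges over $[0,1]$ this continuous function attains a range of values whose endpoints are $\tfrac12\big(R^2 \pm \sqrt{(r_1^2-r_j^2)^2 + 4 r_1^2 r_j^2}\big) = \tfrac12(R^2 \pm R^2) = \{0, R^2\}$. By the intermediate value theorem $|z_{1,s}|^2$ takes every value in $[0,R^2]$, hence $|z_{1,s}|$ takes every value in $[0,R]$; since $\rho_1^2 + \rho_j^2 = R^2$ forces $\rho_1 \in [0,R]$, we can pick $s$ with $|z_{1,s}| = \rho_1$, and then automatically $|z_{j,s}| = \sqrt{R^2 - \rho_1^2} = \rho_j$. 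This produces the desired $z' = \tau_j^s \xi_j^t z$.

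The one genuinely fiddly point is the alignment step and the degenerate cases: if $r_1 = 0$ or $r_j = 0$, one cannot align phases by $\xi_j^t$ alone, but then $\tau_j^s$ already mixes the two coordinates and a direct inspection of $|z_{1,s}|^2$ as a function of $s$ (now a pure combination of $R^2$ and a single sinusoid in $s$, since only one of $r_1, r_j$ is nonzero) again sweeps $[0,R^2]$, so the same intermediate-value argument closes the case; one may also first apply a small $\tau_j$ to make both moduli nonzero and reduce to the generic situation. I expect the main obstacle to be purely bookkeeping — writing $|z_{1,s}|^2$ explicitly and verifying that its oscillation amplitude is exactly $R^2$ so that the range is all of $[0,R^2]$ — rather than anything conceptual; the geometric content is simply that $\xi_j$ rotates the phase and $\tau_j$ performs a ``hyperbolic-looking'' but in fact norm-preserving rotation in the $(z_1,z_j)$-plane, and together they act transitively on the level set of $|z_1|^2 + |z_j|^2$ as far as moduli are concerned.
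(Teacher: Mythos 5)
Your reduction to the $(z_1,z_j)$ block and your observation that $|z_1|^2+|z_j|^2$ is conserved are both correct, and the IVT strategy (show $|z_{1,s}|^2$ sweeps $[0,R^2]$) is exactly the right skeleton. The gap is in the alignment step, and the computation that follows is inconsistent with it. If you first use $\xi_j^t$ to arrange $\arg z_1 = \arg z_j$, so that $z_1 = r_1 e^{i\theta}$, $z_j = r_j e^{i\theta}$, then
\[
|z_{1,s}|^2 \;=\; \tfrac14\bigl|(e^{i2\pi s}+1)r_1 + (e^{i2\pi s}-1)r_j\bigr|^2
\;=\; \tfrac12\bigl(R^2 + (r_1^2-r_j^2)\cos 2\pi s\bigr),
\]
with \emph{no} $\sin 2\pi s$ term: writing $w=e^{i2\pi s}$ the expression is $|w(r_1+r_j)+(r_1-r_j)|^2$ and since $(r_1+r_j)(r_1-r_j)$ is real, the cross term is a pure cosine. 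The oscillation amplitude is therefore $|r_1^2-r_j^2|$, not $R^2$, so the range of $|z_{1,s}|^2$ is only $[\min(r_1^2,r_j^2),\max(r_1^2,r_j^2)]$. The concrete failure: take $r_1=r_j$ with aligned phases; then $|z_{1,s}|^2\equiv R^2/2$ is constant in $s$ and $\rho_1\in\{0,R\}$ is unreachable. The $2r_1 r_j\sin 2\pi s$ term you wrote belongs to the \emph{perpendicular} configuration, not the aligned one: for a phase offset $\phi$ one finds $|z_{1,s}|^2 = \tfrac12\bigl(R^2 + (r_1^2-r_j^2)\cos 2\pi s - 2r_1 r_j\sin\phi\,\sin 2\pi s\bigr)$, which has amplitude $\sqrt{(r_1^2-r_j^2)^2 + 4r_1^2 r_j^2\sin^2\phi}$; this equals $R^2$ precisely when $\sin\phi=\pm1$.

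So the fix is to use $\xi_j^t$ to put the two phases a quarter turn apart rather than equal — that is exactly what the paper does by choosing $t$ so that $z'_1$ can be made to vanish (and, with a different $(t,s)$, so that $z'_j$ vanishes), then appealing to continuity in $(t,s)$ to fill in $[0,R^2]$. Your degenerate-case paragraph ($r_1=0$ or $r_j=0$) is fine, since then the alignment question is moot and the amplitude is automatically $R^2$; but the generic case as written does not close. If you replace ``$\arg z_1=\arg z_j$'' by ``$\arg z_1 = \arg z_j + \tfrac{\pi}{2} \pmod{2\pi}$'' (after applying $\xi_j^t$), your displayed formula and the conclusion $\{|z_{1,s}|^2 : s\in[0,1]\}=[0,R^2]$ become correct, and the rest of your argument goes through.
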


\begin{proof} Let $z_j=r_je^{i2\pi \theta_j}$. We have that
 $$z'_1=e^{i\pi s}\left(\cos(\pi s)r_1e^{i2\pi \theta_1} -i\sin(\pi s)r_2e^{i2\pi \theta_j}e^{i2\pi t}  \right)$$
hence if we choose $t+\pi/2+\theta_j=\theta_1 [2\pi]$ and $\tan(\pi s) = r_1/r_2$ we get that $z'_1=0$ hence $|z'_j|^2=r_1^2+r_j^2$. 
Since 
$$z'_j=e^{i\pi s}\left(-i\sin(\pi s)r_1e^{i2\pi \theta_1} +\cos(\pi s)r_2e^{i2\pi \theta_j}e^{i2\pi t}  \right)$$
it is also possible to choose $t$ and $s$ such that $z'_j=0$. By continuity any value between $0$ and $r_1^2+r_j^2$ is possible for $|z'_j|^2$ and the lemma is proved.
\end{proof}

\begin{lemm} \label{lemme.transitif.2} Given any $\rho_1,\ldots,\rho_n$ such that $\rho_1^2+\ldots+\rho_n^2=1$ and any $z \in X$, there exist $t_1,\ldots,t_{4n-4}$ such that 
$$z'=\tau_2^{t_{4n-4}}\xi_2^{t_{4n-5}} \ldots \tau_n^{t_{2n}}\xi_n^{t_{2n-1}} \tau_{2}^{t_{2n-2}}\xi_2^{t_{2n-3}} \ldots \tau_n^{t_2}\xi_n^{t_1} z$$ satisfies $|z'_j|=\rho_j$ for every $j=1,\ldots,n$. 
\end{lemm}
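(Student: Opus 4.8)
The plan is to prove the statement by induction on $n$, peeling off one coordinate at a time from the ``top'' index down to index $2$, using Lemma \ref{lemme.transitif.1} as the basic two-coordinate move. The key observation is that each elementary pair $\tau_j^{s}\xi_j^{t}$ acts only on the coordinates $z_1$ and $z_j$, and (by the computation in Lemma \ref{lemme.transitif.1}) it can redistribute the ``mass'' $|z_1|^2+|z_j|^2$ arbitrarily between those two slots while leaving all other coordinates fixed and leaving the total $\sum_j|z_j|^2=1$ unchanged. So the natural strategy is: first use a $\tau_n,\xi_n$ block to set $|z_n|=\rho_n$, transferring the excess into the $z_1$ slot; then a $\tau_{n-1},\xi_{n-1}$ block to set $|z_{n-1}|=\rho_{n-1}$; and so on down to $j=2$. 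After all these moves, the coordinates $z_2,\ldots,z_n$ have the prescribed moduli, and since the sum of all squared moduli is preserved and equals $1=\rho_1^2+\cdots+\rho_n^2$, the remaining coordinate automatically satisfies $|z_1|=\rho_1$. This is exactly one pass $j=n,n-1,\ldots,2$, which accounts for the word ``$\tau_n^{t_2}\xi_n^{t_1}$ up through $\tau_2^{t_{2n-2}}\xi_2^{t_{2n-3}}$'' in the statement.

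The subtlety is a feasibility constraint hidden in Lemma \ref{lemme.transitif.1}: to achieve $|z'_1|=\rho_1$ and $|z'_j|=\rho_j$ we need $\rho_1^2+\rho_j^2=|z_1|^2+|z_j|^2$, i.e. the target for the pair must not exceed the currently available mass in that pair. When we process index $j$, the available mass in the $(1,j)$ pair is $|z_1^{(\mathrm{cur})}|^2+|z_j^{(\mathrm{cur})}|^2=|z_1^{(\mathrm{cur})}|^2+r_j^2$ where $r_j=|z_j|$ is the original modulus (it has not been touched yet), so we need $\rho_j^2\le|z_1^{(\mathrm{cur})}|^2+r_j^2$, equivalently $\rho_j^2-r_j^2\le|z_1^{(\mathrm{cur})}|^2$. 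If at some stage $|z_1^{(\mathrm{cur})}|^2$ is too small this fails. This is precisely why the statement of the lemma uses \emph{two} passes through the indices $2,\ldots,n$ (the $t_{2n-1},\ldots,t_{4n-4}$ part): I expect the first pass to be used to guarantee that enough mass is moved into the $z_1$ slot, or rather to arrange the coordinates into a configuration from which a single clean second pass works. The hard part of the write-up is therefore the bookkeeping that shows the feasibility inequality can always be met; I would handle it by first doing a preliminary pass that sets $|z_j|$ to, say, $\min(r_j,\rho_j)$ for $j=n,\ldots,2$, which only moves mass \emph{into} the $z_1$ slot and hence can never violate the constraint, thereby guaranteeing after the first pass that $|z_1^{(\mathrm{cur})}|^2=1-\sum_{j\ge2}\min(r_j,\rho_j)^2\ge 1-\sum_{j\ge2}\rho_j^2=\rho_1^2$ and also $\ge$ each individual deficit $\rho_j^2-\min(r_j,\rho_j)^2$; then the second pass sets each $|z_j|$ to $\rho_j$ exactly, which is now feasible because at the moment we treat index $j$ the $z_1$ slot still holds at least $\rho_1^2+\sum_{k<j,k\ge 2}0+\cdots$ — i.e. enough mass, since the earlier indices in the second pass only removed mass down to their targets $\rho_k$ and the running surplus stays nonnegative.

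Concretely the steps are: (1) state and use Lemma \ref{lemme.transitif.1} as a black box for the pair $(1,j)$, noting it lets us hit any $|z'_j|^2\in[0,\,|z_1|^2+|z_j|^2]$ with $|z'_1|^2$ the complementary value; (2) first pass $j=n,\ldots,2$: apply $\tau_j^{s}\xi_j^{t}$ to push $|z_j|$ down to $\min(r_j,\rho_j)$ (always feasible, only increases $|z_1|$); (3) check that after the first pass $|z_1|^2\ge\rho_1^2$ and more precisely that the running mass in the $z_1$ slot dominates every future deficit; (4) second pass $j=n,\ldots,2$: apply $\tau_j^{s}\xi_j^{t}$ to set $|z_j|=\rho_j$ exactly, verifying feasibility at each step from the bound in (3); (5) conclude $|z_1|=\rho_1$ from conservation of $\sum|z_j|^2$ and from $\sum\rho_j^2=1$. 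I would keep the index-arithmetic matching the statement's subscripts ($t_1,\ldots,t_{2n-2}$ for the first pass, $t_{2n-1},\ldots,t_{4n-4}$ for the second) but not belabor it. The only genuine obstacle is step (3)/(4) — the feasibility of the exact second pass — and the $\min(r_j,\rho_j)$ trick in step (2) is designed precisely to remove it; everything else is the direct two-dimensional computation already carried out in Lemma \ref{lemme.transitif.1}.
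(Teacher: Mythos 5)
Your proposal is correct and follows essentially the same two-pass strategy as the paper: treat $z_1$ as a reservoir, use Lemma~\ref{lemme.transitif.1} as the two-coordinate primitive, accumulate mass in the reservoir during the first pass $j=n,\ldots,2$, then distribute it during the second. The one place where you and the paper diverge is the choice of first-pass target: you push $|z_j|$ down to $\min(r_j,\rho_j)$, whereas the paper simply sets $|\bar z_j|=0$ for every $j\geq2$. Zeroing out is always feasible (one only ever moves mass \emph{toward} the $z_1$ slot, so Lemma~\ref{lemme.transitif.1}'s constraint is automatically met), it concentrates the full unit of mass in $|z_1|$, and it makes the second-pass feasibility a one-line telescoping budget: when processing index $n-j-1$ one has $|\bar z^{(j)}_1|^2+|\bar z^{(j)}_{n-j-1}|^2 = 1-\rho_n^2-\cdots-\rho_{n-j}^2 \geq \rho_{n-j-1}^2$. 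Your $\min$-rule is also valid, and your feasibility inequality
$$\rho_1^2+\sum_{2\leq k<j}\bigl(\rho_k^2-\min(r_k,\rho_k)^2\bigr)\geq 0$$
does hold term by term, but it is more bookkeeping than the argument requires; if you were to write this up, the paper's ``zero out everything first'' version would be cleaner and would eliminate the case analysis hidden in the $\min$.
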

\begin{proof}
Using lemma \ref{lemme.transitif.1} repeatedly we obtain first $t_1,\ldots,t_{2n-2}$ such that  $\bar{z}=\tau_{2}^{t_{2n-2}}\xi_2^{t_{2n-3}} \ldots \tau_n^{t_2}\xi_n^{t_1} z$ satisfies $\bar{z}_j=0$ for every $j=2,\ldots,n$. Next we choose $t_{2n-1}$ and $t_{2n}$ such that  $\bar{z}^{(2)}=\tau_n^{t_{2n}}\xi_n^{t_{2n-1}}\bar{z}$ satisfies $|\bar{z}^{(2)}_n|=\rho_n$ : this is possible by lemma \ref{lemme.transitif.1} since $1=|\bar{z}_1|^2+|\bar{z}_n|^2 \geq \rho_n^2$.  
We proceed inductively so  that at each step $j\leq n-2$ we have that $|\bar{z}^{(j)}_l|=\rho_l$ for $n-j\leq l \leq n$ and   $\bar{z}^{(j)}_l=0$ for $1< l < n-j$. Indeed, since $|\bar{z}^{(j)}_1|^2+|\bar{z}^{(j)}_{n-j-1}|^2=1-\rho_n^2-\ldots-\rho_{n-j}^2\geq \rho_{n-j-1}^2$, we can apply lemma  \ref{lemme.transitif.1} and choose $t_{2n+2(j+1)-1}$ and $t_{2n+2(j+1)}$ such that  $\bar{z}^{(j+1)}=\tau_{n-j-1}^{t_{2n+2(j-2)}}\xi_{n-j-1}^{t_{2n+2(j-2)-1}}\bar{z}^{(j)}$ satisfies $|\bar{z}^{(j+1)}_{n-j-1}|=\rho_{n-j-1}$. Since $\tau_{n-j-1}$ and $\xi_{n-j-1}$ leave the $r^{\rm th}$ coordinates intact for $r\neq 1$ and $r \neq n-j-1$ we still have  $|\bar{z}^{(j+1)}_{l}|=\rho_{l}$ for $n-j\leq l\leq n$. 
\end{proof}

\begin{proof}[Proof of Proposition \ref{prop.transitive.sequence}] 

Lemma \ref{lemme.transitif.2} implies that with an adequate choice of $s_{n+1},\ldots,s_{5n-4}$ one can obtain arbitrary moduli for the coordinates of  $\bar z=k_{n+1}^{s_{n+1}} \ldots k_{5n-4}^{s_{5n-4}}z$. Next, with an adequate choice of $s_0,\ldots,s_n$, such that $s_1=0$ we can further  fix the arguments of $z'=k_{0}^{s_{0}} \ldots k_{n}^{s_{n}} \bar z=\xi_1^{s_0} \xi_2^{s_2} \xi_3^{s_3}\ldots \xi_n^{s_n} \bar z$, and the proof  of Proposition \ref{prop.transitive.sequence}
is complete.
\end{proof}

 \subsection{The inductive step of the successive conjugation construction}

The novelty in our construction is that the each step of the successive conjugacy is itself constructed through an inductive procedure that allows to saturate all the directions of the transitive sequence of rotations.

We further expand our transitive sequence of $k_i$'s by introducing  $k_{5n-3}=\tau_n,\ldots,k_{6n-5}=\tau_2$, $k_{6n-4}=\xi_2$.  

We let $M=6n-4$.

For a choice (to be determined later) of sequences $A_0,\ldots,A_M$ and $q_0,\ldots,q_M$, we let 
\begin{align*} g_0&=k_0^{A_0\psi_{2,q_0} }, \quad k_0=\xi_1 \\
g_1&=k_1^{A_1 \chi_{2,q_1}}, \quad k_1=\tau_2 \\
g_2&=  k_2^{A_2 \psi_{1,q_2}}, \quad k_2=\xi_2 \\
g_3&= k_3^{A_3 \psi_{1,q_3}}, \quad k_3=\xi_3 \\ 
&\ldots \\ 
g_n&= k_n^{A_n \psi_{1,q_n}}, \quad k_n=\xi_n \end{align*}
next, we let 
\begin{align*} 
g_{n+1}&=k_{n+1}^{A_{n+1}\chi_{2,q_{n+1}}}, \quad k_{n+1}=\tau_2 \\
g_{n+2}&= k_{n+2}^{A_{n+2}\psi_{1,q_{n+2}}}, \quad k_{n+1}=\xi_2 \\ 
&\ldots \\
g_{5n-5}&= k_{5n-5}^{A_{5n-5}\chi_{2,q_{5n-5}}}, \quad k_{5n-5}=\tau_n \\
g_{5n-4}&= k_{5n-4}^{A_{5n-4}\psi_{1,q_{5n-4}}}, \quad k_{5n-4}=\xi_n \end{align*}
and finally
\begin{align*} g_{5n-3}&=k_{5n-3}^{A_{5n-3}\chi_{2,q_{5n-3}}}, \quad k_{5n-3}=\tau_n\\
&\ldots \\
g_{6n-5}&=k_{6n-5}^{A_{6n-5}\chi_{2,q_{6n-5}}}, \quad k_{6n-5}=\tau_2\\
g_{6n-4}&=  k_{6n-4}^{A_{6n-4} \psi_{1,q_{6n-4}}},  \quad k_{6n-4}=\xi_2\end{align*}

We define for each $l\leq M$, $G_l=g_0\circ \ldots \circ g_l$.


\begin{defin} We say that $z$ is $(m;a_1,\ldots,a_s;\nu)$-transversal if for any $i \neq m$ we have  for $\lambda=0$ and  $\lambda=1$
$${\rm Leb} \{t_1,\ldots,t_s : | \lambda (a_1^{t_1}\ldots a_s^{t_s} z)_i -(a_1^{t_1}\ldots a_s^{t_s} z)_m|<\nu\}<C\nu $$
where $C$ is a constant that does not depend on $z$ or $\nu$. 
\end{defin} 

Notice that if $z$ is such that $|z_1|>\eta$ then for any $\nu>0$ sufficiently small  
$${\rm Leb} \{t : |  (\xi_j^t z)_j -z_1|<\nu\}<C\nu,$$
a property that we denote by $z$ is $((1,j),\xi_j;\nu)$-transversal.

\begin{prop} \label{prop.explicit} Given any $\a_0=p_0/q_0 \in [0,1)$, any $\eta>0$, $\Delta>0$ and $\eps>0$, there exist sequences $A_0,\ldots,A_M$,  $\a_1=p_1/q_1,\ldots,\a_{M+1}=p_{M+1}/q_{M+1}$,  and $\eps^{100}=\eps_0\geq\eps_1\geq \ldots \geq \eps_M$ such that if we denote $f_l=G_{l} \varphi^{\a_{l+1}} G_{l}^{-1}$, $f_{-1}=\varphi^{\a_0}$  
we have  
\begin{enumerate} 
\item ${\left| f_{l+1}^i- f_{l}^i  \right|}_{\Delta} <\eps_0/2^{l+2}, \quad \forall |i|\leq q_{l+1}$ and  $M-1\geq l\geq -1$. 

\item For any $M>l>2$ and any $M\geq L\geq l$, and any  $\cO$ such that $\cO$ is $\eps_l$-uniformly distributed along $\varphi$,$k_l,\ldots, k_L, z$ and if $z$ is $(1;k_l,\ldots, k_L;\eps_l)$-transversal then $g_{l-1}\cO$ is  $\eps_{l-1}$-uniformly distributed along $\varphi$,$k_{l-1},\ldots, k_L, z$ and $z$ is $(1;k_{l-1},k_l,\ldots, k_L;\eps_{l-1})$-transversal.

\item (Case $l=2$). For any  $M\geq L\geq 2$, and any  $\cO$ such that $\cO$ is $\eps_2$-uniformly distributed along $\varphi$, $k_2,\ldots, k_L, z$ and if $z$ is $(1;k_2,\ldots, k_L;\eps_2)$-transversal then $g_{1}\cO$ is  $\eps_{1}$-uniformly distributed along $\varphi$,$k_{1},\ldots, k_L, z$ and $z$ is $(2;k_{1},k_2,\ldots, k_L;\eps_{1})$-transversal  (the difference form the previous property is in the change of the transverse coordinate from 1 to 2)).

\item (Case $l=1$). If $\cO$ is  $\eps_{1}$-uniformly distributed along $\varphi$,$k_{1},\ldots, k_L, z$ and $z$ is $(2;k_{1},k_2,\ldots, k_L;\eps_{1})$-transversal, then  $g_0 \cO$ is $\eps_0$-uniformly distributed along $\varphi$,$k_{0},\ldots, k_L, z$.

\item If $|z_1|>\eta $ then  $g_{6n-4}(\cO(\varphi^{\a_{6n-3}},q_{6n-3},z))$ is $\eps_{6n-4}$-uniformly distributed  along $\varphi, k_{6n-4}$ and $z$; and $z$ is $((1,2); k_{6n-4};\eps_{6n-4})$-transversal.

\item  If $\cO$ is $\eps_{6n-4}$-uniformly distributed along $\varphi, k_{6n-4}$ and $z$ and if 
$z$ is $((1,2),k_{6n-4};\eps_{6n-4})$-transversal then $g_{6n-5} \cO$ is  $\eps_{6n-5}$-uniformly distributed along $\varphi,k_{6n-5},k_{6n-4}$ and $z$ is $(1,k_{6n-5},k_{6n-4};\eps_{6n-5})$-transversal.

\item If for some $n\geq j\geq 2$ we have that $|z_1-z_j|>\eta$ then $g_{6n-j-3}(\cO(\varphi^{\a_{6n-j-2}},q_{6n-j-2},z))$ is $\eps_{6n-j-3}$-uniformly distributed  along $k_{6n-j-3}$ and $z$; and $z$ is $(1; k_{6n-j-3};\eps_{6n-j-3})$-transversal.

\end{enumerate} 

\end{prop}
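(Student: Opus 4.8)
The plan is to choose the parameters one conjugacy at a time, in the order
\[
A_0,\ \a_1,\ A_1,\ \a_2,\ \dots,\ A_M,\ \a_{M+1},
\]
fixing the constants $\eps^{100}=\eps_0\geq\eps_1\geq\dots\geq\eps_M$ as we go. Two structural facts about the maps $g_l=k_l^{A_l\phi_l}$, where $\phi_l$ is one of the polynomials $\psi_{\cdot,q_l}$ or $\chi_{\cdot,q_l}$, drive everything. First, $\phi_l$ is invariant under $\varphi^{p_l/q_l}$, so $g_l$ commutes with $\varphi^{\a_l}$; hence $G_l\varphi^{\a_l}G_l^{-1}=G_{l-1}\varphi^{\a_l}G_{l-1}^{-1}=f_{l-1}$ while $f_l=G_l\varphi^{\a_{l+1}}G_l^{-1}$. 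Second, along a $\varphi$-orbit one has $\phi_l(\varphi^s w)={\rm Re}\bigl(e^{2\pi isq_l}\,c(w)^{q_l}\bigr)$, where $c(w)$ is a coordinate $w_i$ (the $\psi$ case) or a difference $w_1-w_i$ (the $\chi$ case); so the twist $s\mapsto A_l\phi_l(\varphi^s w)$ equals $A_l|c(w)|^{q_l}\cos(2\pi sq_l+\theta)$, an oscillation we can make as violent as we like by enlarging $A_l$, provided $|c(w)|$ stays bounded below.

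Item~1 is the classical Anosov--Katok closeness estimate, and I would settle it first. By the commutation just noted, $f_{l+1}^{\,i}$ and $f_l^{\,i}$ both equal $G_l\,g_{l+1}\,\varphi^{i\a}\,g_{l+1}^{-1}\,G_l^{-1}$, with $\a=\a_{l+2}$ in the first case and $\a=\a_{l+1}$ in the second. Since $G_l$ and $g_{l+1}$ are fixed entire diffeomorphisms, hence bi-Lipschitz on the relevant compact subsets of $\C^{2n}$, and $|\varphi^{i\a_{l+2}}-\varphi^{i\a_{l+1}}|\leq C|i|\,|\a_{l+2}-\a_{l+1}|$ there, it suffices, once $A_{l+1}$ and $q_{l+1}$ have been fixed, to choose $\a_{l+2}=p_{l+2}/q_{l+2}$ with $q_{l+2}$ so large that $q_{l+1}|\a_{l+2}-\a_{l+1}|$ is negligible; the inverses are treated identically. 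Every subsequent constraint on $q_{l+2}$ is again of the form ``$q_{l+2}$ large'', so nothing clashes.

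The substance of items~2--7 reduces to a single \emph{twist step}. Suppose $\cO$ is $\eps_l$-uniformly distributed along $\varphi,k_l,\dots,k_L$ and $z$, and suppose the amplitude $|c(\cdot)|^{q_{l-1}}$ stays bounded below along the family $\{\,k_l^{t_l}\cdots k_L^{t_L}z\,\}$ for all but an $\eps_l$-fraction of the parameters — which is exactly what the transversality hypothesis on $z$ supplies (for the outermost conjugacies this lower bound is the explicit hypothesis $|z_1|>\eta$ or $|z_1-z_j|>\eta$, via the remark preceding the Proposition, the $\chi$ being chosen to match the relevant difference). Then $g_{l-1}\cO=\{\,k_{l-1}^{A_{l-1}\phi_{l-1}(w)}\,w:w\in\cO\,\}$ is $\eps_{l-1}$-uniformly distributed along $\varphi,k_{l-1},\dots,k_L$ and $z$. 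Indeed, by Fubini in $(t_l,\dots,t_L)$ it is enough that, for most fixed values of those, the map $s\mapsto A_{l-1}\phi_{l-1}(\varphi^sk^{\vec t}z)\ (\mathrm{mod}\,1)$ push Lebesgue on $[0,1]$ forward to a measure $\eps_{l-1}$-close to Lebesgue on $\T$; as this map is $A_{l-1}|c|^{q_{l-1}}\cos(2\pi sq_{l-1}+\theta)\ (\mathrm{mod}\,1)$ with the amplitude huge, its pushforward is a scaled arcsine density wrapped around $\T$ many times, whose discrepancy from uniform tends to $0$ as $A_{l-1}\to\infty$, and composing with the continuous parametrization of the enlarged $\varphi,k_{l-1},\dots,k_L$ action on the orbit of $z$ transports this to the claimed equidistribution. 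For the finite set $\cO$ to resolve this one also needs its discretization scale to beat the oscillation, i.e. $\eps_l\ll(A_{l-1}q_{l-1})^{-1}$, which is free since $\eps_l$ is fixed after $A_{l-1}$. Items~5 and~7 are the base cases of this scheme applied to $\cO(\varphi^{\a_m},q_m,z)=\{\varphi^{j/q_m}z\}$, which is $O(1/q_m)$-uniformly distributed along $\varphi$ as soon as $q_m$ is taken large (and coprime to $q_{m-1}$) after $A_{m-1}$; item~6 and the endpoint cases ($l=2$ in item~3, $l=1$ in item~4) are the same step, with the extra bookkeeping of which index plays the transverse role.

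The main obstacle is the \emph{propagation of transversality}: the twist step must return, besides the enlarged equidistribution, a transversality statement for $z$ with respect to the new longer list $k_{l-1},\dots,k_L$ — and in items~3--4 with the transverse index shifted between $1$ and $2$. This is precisely what dictates the interleaved ordering of the $\xi_j$'s and $\tau_j$'s of Proposition~\ref{prop.transitive.sequence} together with the auxiliary conjugacies $k_{5n-3},\dots,k_{6n-4}$: inserting a $\tau_j$ redistributes the mass of the pair $(z_1,z_j)$ so that the moduli and differences needed downstream cannot all degenerate simultaneously along an orbit, and one checks that a unitary rotation of a coordinate pair can only improve the separation. Verifying the uniformity of the constant $C$ in the transversality definition (independent of $z$ and of the scale), the uniformity over all $L\geq l$, and carrying the decreasing chain $\eps_0\geq\dots\geq\eps_M$ through all $M=6n-4$ steps without circularity, is where essentially all the labor lies; by contrast the analytic side is automatic, since each $g_l^{\pm1}$ is an explicit entire map built from invertible linear maps and the polynomials $\psi$, $\chi$.
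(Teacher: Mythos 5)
Your proposal follows essentially the same route as the paper: you isolate the same single-conjugacy twist step as the building block, choose the parameters $A_0,\eps_1,\a_1,A_1,\eps_2,\a_2,\dots,A_M,\a_{M+1}$ in the same interleaved order, settle item~1 by the same commutation-plus-bi-Lipschitz argument with $\a_{l+2}$ taken close to $\a_{l+1}$, and leave the arcsine/pushforward estimate at the same level of detail that the paper itself waves past as a ``straightforward fact.'' The only phrase worth softening is that a unitary rotation ``can only improve the separation'' --- it certainly can destroy it --- but the actual content, which your next clause and the $(m;a_1,\dots,a_s;\nu)$-transversality definition both capture, is that it does so only on a set of parameter angles of measure $O(\nu)$.
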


\begin{proof}[Proof of Theorem \ref{theorem.ergodic}] 
Before we prove Proposition \ref{prop.explicit} we show how it implies that $f_M=G_M \varphi^{\a_{M+1}} G_M^{-1}$ satisfies the requirements of Proposition \ref{main}, from where  Theorem \ref{theorem.ergodic} would follow.

First of all, it follows from 1) of Proposition \ref{prop.explicit} and a choice of $\a_0$ such that $|\a_0-t_0|<\eps_0/2$ that 
${\left| f_M- \varphi^{t_0}  \right|}_{\Delta} <\eps_0$. 

Given any $x \in X$, we claim that there exists $N\leq l\leq M$ such that $\cO(f_l,q_{l+1},x)$ is $\eps_0$-uniformly distributed  along $\{k_0,\ldots,k_l\}$ and some $y \in X$. Due to 1) of  Proposition \ref{prop.explicit} this is sufficient to yield a similar property for $f_M$ (if we replace $\eps_0$ by $\eps$). But uniform distribution along $\{k_0,\ldots,k_l\}$  yields {\it a fortiori} uniform distribution along $\{k_0,\ldots,k_N\}$ and hence the requirements of Proposition \ref{main} will be satisfied.

To prove our claim, we first need the following immediate lemma.

\begin{lemm} \label{lemma.start} Define $\eta=1/4^{n+1}$. Then, given any $\bar z \in X$, then either $|{\bar z}_1|>\eta$ or there exists $j\in[2,n]$ such that $z=g_{6n-j-2}\ldots g_{6n-4} \bar z$ satisfies $|z_1-z_j|> \eta$. 
\end{lemm}

\begin{proof}[Proof of lemma \ref{lemma.start}]  Define $\eta_j=4^j/ 4^{n+1}$ for $j=1,\ldots,n$.  If $|\bar{z}_1|\leq \eta_1$ while $|\bar{z}_2|\geq \eta_2$ then since  $z=g_{6n-4} \bar z$ satisfies $z_1=\bar{z}_1$ and $|z_2|=|\bar{z}_2|$ we get that $|z_1-z_2|\geq \eta_2-\eta_1\geq 2\eta_1.$ We now apply a similar argument for $j>2$.

We first show by induction on $j$ that if $|\bar{z}_i|\leq \eta_i$ for $i=1,\ldots,j$ then $z'=\tau_{j}^{t_j}\ldots \tau_{2}^{t_2} \xi_2^{t_1} \bar{z}$ satisfies  $|z'_1|\leq 2\eta_{j}$  for any choice of $t_1,\ldots,t_j$. Indeed if we suppose the latter true up to $j$ and assume in addition that  $|\bar{z}_{j+1}|\leq \eta_{j+1}$ then $z:=\tau_{j+1}^{t_{j+1}} z'$ satisfies $z_{1}=1/2\left((e^{i2\pi t_{j+1}}+1)z'_1+(e^{i2\pi t_{j+1}}-1) \bar{z}_{j+1}   \right) \leq 2\eta_j+\eta_{j+1} \leq 2 \eta_{j+1}$ (we used that $z'_{j+1}=\bar{z}_{j+1}$).

Now, if to the contrary we suppose that $|\bar{z}_i|\leq \eta_i$ for $i=1,\ldots,j$ while $|\bar{z}_{j+1}|\geq \eta_{j+1}$ and use the same notations as above for $z'$ and $z$ then since $z_{j+1}-z_1=z'_{j+1}-z'_1=\bar{z}_{j+1}-z'_1$ we get that 
$\|z_{j+1}-z_1|\geq \eta_{j+1}-2\eta_j = 2\eta_j$. 

But since $\sum_{i=1}^n |\bar{z}|^2 =1$, we have that if $|{\bar z}_1|\leq \eta$, then there necessarily exists a $j\in [1,n-1]$ such that   $|\bar{z}_i|\leq \eta_i$ for $i=1,\ldots,j$ while $|\bar{z}_{j+1}|\geq \eta_{j+1}$. This finishes the proof of the lemma. \end{proof}



Back to the requirements of Proposition \ref{main}, given $x \in X$ we let $\bar z=G_{6n-4}^{-1}x$. Then we have two alternatives

$\bullet$ If $|\bar z_1|> \eta$, we prove that 
$\cO(f_{6n-4},q_{6n-3},x)$ is $\eps$-uniformly distributed  along $\{k_0,\ldots,k_{6n-4}\}$ and  $\bar z$. Indeed, this amounts to proving that  \newline $G_{6n-4} 
(\cO(\varphi^{\a_{6n-3}},q_{6n-3},\bar z))$ is $\eps_0$-uniformly distributed  along $\{k_0,\ldots,k_{6n-4}\}$ and $\bar z$. To obtain the latter, we apply 5) of Proposition  \ref{prop.explicit}, then 6), and then 2) inductively until we finish with 3) then 4). 

$\bullet$ If $|\bar z_1|\leq \eta$, then for $j$ as in Lemma \ref{lemma.start} we let 
$z=g_{6n-j-2}\ldots g_{6n-4} \bar z= G_{6n-j-3}^{-1}x$, and we prove that  
 $\cO(f_{6n-j-3},q_{6n-j-2},x)$ is $\eps_0$-uniformly distributed  along $\{k_0,\ldots,k_{6n-j-3}\}$ and  $z$. Indeed, it is sufficient to prove that 
\newline $G_{6n-j-3} 
(\cO(\varphi^{\a_{6n-j-2}},q_{6n-j-2},z))$ is $\eps_0$-uniformly distributed  along $\{k_0,\ldots,k_{6n-j-3}\}$ and $ z$. Since $|z_1-z_j|>\eta$, we can apply 6) of Proposition  \ref{prop.explicit} and then 2) inductively until we finish with 3) then 4). 

The proof of Theorem \ref{theorem.ergodic} is hence completed. 
\end{proof}

\begin{proof}[Proof of Proposition \ref{prop.explicit}]

Proposition \ref{prop.explicit} is proved by a finite induction of which the main building block is provided by the following straightforward fact:

{\it For any $\eps>0$ and any $a_1,\ldots,a_s \in \{k_0,\ldots,k_M\}$ there exists $A>0$ and $\eps'>0$ such that: if $g=a^{A \upsilon_{1,q}}$ with $q\geq Q$ and $(a,\upsilon)=(\tau_j,\chi)$ or $(a,\upsilon)=(\xi_j,\psi)$ and if 
$\cO$  is $\eps'$-uniformly distributed along $\varphi$,$a_1,\ldots, a_s, z$ and if $z$ is $(1;a_1,\ldots, a_s;{\eps'})$-transversal then $g\cO$ is  $\eps$-uniformly distributed along $\varphi$,$a,a_1,\ldots, a_s, z$ and $z$ is $(1;a,a_1,\ldots, a_s;\eps)$-transversal.}

The latter as we will see will be useful for the proof of 2) of Proposition \ref{prop.explicit}. Similar statements are valid and serve for proving 3)--7) of the proposition.



We describe now how the finite induction is carried out to prove Proposition \ref{prop.explicit}. Assume that we are given $A_i$ for $i\leq 6n-5$ and $\a_i, \eps_i$ for $i\leq 6n-4$. Then we choose $A_{6n-4}$ sufficiently large and $\eps'$ such that if $\cO$ is $\eps'$-uniformly distributed along $\varphi$ and $z$ and if $|z_1|>\eta$ then $g_{6n-4} \cO$ is $\eps_{6n-4}$-uniformly distributed along $\varphi,k_{6n-4}$ and $z$ is $((1,2),k_{6n-4};\eps_{6n-4})$-transversal. 

Now, we choose $\a_{6n-3}$ such that 1) of proposition \ref{prop.explicit} holds with $l=6n-5$ and $\cO(\varphi^{6n-3},q_{6n-3},z)$ is $\eps'$-uniformly distributed along $\varphi$ and $z$. Hence (5) of proposition \ref{prop.explicit} holds.

Next, given $A_i$ for $i\leq 6n-6$ and $\a_i, \eps_i$ for $i\leq 6n-5$, we choose $A_{6n-5}$ sufficiently large and $\eps_{6n-4}$ such that if $\cO$ is $\eps_{6n-4}$-uniformly distributed along $\varphi, k_{6n-4}$ and $z$ and if 
$z$ is $((1,2),k_{6n-4};\eps_{6n-4})$-transversal then $g_{6n-5} \cO$ is  $\eps_{6n-5}$-uniformly distributed along $\varphi,k_{6n-5},k_{6n-4}$ and $z$ is $(1,k_{6n-5},k_{6n-4};\eps_{6n-5})$-transversal.  
Then we choose $\a_{6n-4}$ to guarantee $1)$ of proposition \ref{prop.explicit} with $l=6n-6$. 
 We can also ask from our choice of $A_{6n-5}$ and $\eps_{6n-4}$ and $\a_{6n-4}$ that (7) of proposition \ref{prop.explicit} holds.

We can continue inductively : for $l$ decreasing from $l=6n-5$ to $l=3$, we assume given $A_i$ for $i\leq l-2$ and $\a_i, \eps_i$ for $i\leq l-1$, we choose $A_{l-1}$ and $\eps_{l}$  such that $2)$ of 
proposition \ref{prop.explicit} holds, then we choose $\a_l$ such that $1)$ of proposition \ref{prop.explicit} holds, that is  
 ${\left| f_{l-1}^i- f_{l-2}^i  \right|}_{\Delta} <\eps_0/2^{l}, \quad \forall |i|\leq q_{l}$.  For $l\geq 5n-2$ we also ask that  (7) of proposition \ref{prop.explicit} holds. 

 For $l=2$, we choose $A_1$ and $\eps_2$, then $\a_2$ such that $3)$ of proposition \ref{prop.explicit} holds and 
 ${\left| f_{1}^i- f_{0}^i  \right|}_{\Delta} <\eps_0/4, \quad \forall |i|\leq q_{2}$. To finish, we choose 
 $A_0$ and $\eps_1$, then $\a_1$ such that $4)$ of proposition \ref{prop.explicit} holds and 
 ${\left| f_{0}^i- f_{-1}^i  \right|}_{\Delta} <\eps_0/2, \quad \forall |i|\leq q_{1}$.

\end{proof}


\begin{thebibliography}{aaaa}

\bibitem{A} D.V. Anosov, Existence of smooth ergodic flows on smooth manifolds. (Russian) {\em Izv. Akad. Nauk SSSR} Ser. Mat. {\bf 38},  (1974), 518-545.

\bibitem{AK}   D. V. Anosov and A. Katok 
\newblock New examples in smooth 
ergodic theory. Ergodic diffeomorphisms 
\newblock {\em Transactions of the Moscow Mathematical Society}, 23, 
1--35, 1970.

\bibitem{BFK}   M. Brin, J. Feldman and A.Katok, Bernoulli diffeomorphisms and group extensions of dynamical systems with non-zero characteristic exponents, {\it Ann.  Math.}, {\bf113} (1981), 159-179.

\bibitem{Fayad-mixing} B. Fayad, Analytic mixing reparametrizations of irrational flows. {\em Ergodic Theory Dynam. Systems}  {\bf 22} (2002), 437-468.

\bibitem{FK}    B. Fayad and A. Katok, Constructions in elliptic dynamics, {\it Ergodic Theory and Dynamical Systems}, (Herman memorial issue) {\bf 24}, (2004),   1477-1520.

\bibitem{FSW}  B. Fayad, M. Saprykina, A. Windsor, Non-standard smooth realizations of Liouville rotations. {\em Ergodic Theory Dynam. Systems} {\bf 27} (2007), no. 6, 1803-1818.

\bibitem{Gerber} M. Gerber, Conditional stability and real analytic pseudo-Anosov maps. {\em Mem. Amer. Math. Soc.}  {\bf 54} (1985), iv+116 pp. 



\bibitem{K-Hamiltonian} A.Katok,  Ergodic perturbations of degenerate integrable Hamiltonian systems,{\it  Izv. Akad. Nauk. SSSR}, Ser. Math. {\bf37},  (1973), 539-576.


\bibitem{K-79} A.Katok,  Bernoulli diffeomorphisms on surfaces, {\it Ann.  Math.}, {\bf110}, (1979), 529-547.

\bibitem{K-inv} A.Katok, Smooth non-Bernoulli K-automorphisms, {\it Invent. Math.}, {\bf61}, (1980), 291-300.

\bibitem{LdeSa} J. Lewowitz, E. Lima de Sa, Analytic models of pseudo-Anosov maps. {\em Ergodic Theory Dynam. Systems} {\bf  6} (1986), 385-392.

\bibitem{Rudolph} D. Rudolph, Asymptotically Brownian skew products give non-loosely Bernoulli K-automorphisms. {\em Invent. Math.}  {\bf91} (1988), 105-128.


\end{thebibliography}
\end{document}